\documentclass[12pt,a4paper,twoside]{article}
\usepackage[left=2.54cm,top=2.54cm,right=2.54cm,bottom=2.54cm,bindingoffset=0cm]{geometry}
\usepackage[unicode=true,pdfusetitle,bookmarks=true,bookmarksnumbered=true,bookmarksopen=true,bookmarksopenlevel=1,breaklinks=true,pdfborder={0 0 0},backref=false,colorlinks=true,pdfauthor={Benjamin Thompson},pdftitle={The Khovanov Homology of Rational Tangles}]{hyperref}

\usepackage{amsmath,amsfonts,amsthm,amssymb,mathtools,textcomp}

\numberwithin{equation}{section}
\usepackage{chngcntr}
\counterwithin{figure}{section}

\usepackage{graphicx}

\usepackage{fancyhdr}
\pagestyle{fancy}

\fancyhead{}
\fancyhead[le]{}
\fancyhead[ce]{\scshape Benjamin Thompson}
\fancyhead[co]{Khovanov complexes of rational tangles}
\fancyhead[ro]{}

\usepackage[dvipsnames]{xcolor}
\hypersetup{
    colorlinks,
    linkcolor={blue!70!black},
    citecolor={violet},
    urlcolor={green!50! black}
}

\usepackage{tikz}
\usetikzlibrary{arrows}
\usetikzlibrary{decorations.markings}



\newcommand{\tangzero}{\begin{scope}[very thick] \draw[-] (-0.5,-0.5) .. controls (-0.2,-0.2) and (0.2,-0.2) .. (0.5,-0.5);\draw[-] (-0.5,0.5) .. controls (-0.2,0.2) and (0.2,0.2) .. (0.5,0.5);\end{scope}}
\newcommand{\tangzerodota}{\tangzero \fill (0,0.28) circle (6pt);}
\newcommand{\tangzerodotb}{\tangzero \fill (0,-0.28) circle (6pt);}

\newcommand{\tanginf}{\begin{scope}[very thick] \draw[-] (-0.5,0.5) .. controls (-0.2,0.2) and (-0.2,-0.2) .. (-0.5,-0.5);\draw[-] (0.5,0.5) .. controls (0.2,0.2) and (0.2,-0.2) .. (0.5,-0.5);\end{scope}}
\newcommand{\tanginfdota}{\tanginf \fill (-0.28,0) circle (6pt);}
\newcommand{\tanginfdotb}{\tanginf \fill (0.28,0) circle (6pt);}
\newcommand{\tanginfdotab}{\tanginfdota \fill (0.28,0) circle (6pt);}

\newcommand{\nodezero}{\begin{tikzpicture}[scale=0.7]\tangzero\end{tikzpicture}}
\newcommand{\nodezerodota}{\begin{tikzpicture}[scale=0.35]\tangzerodota\end{tikzpicture}}
\newcommand{\nodezerodotb}{\begin{tikzpicture}[scale=0.35]\tangzerodotb\end{tikzpicture}}
\newcommand{\nodeinf}{\begin{tikzpicture}[scale=0.7]\tanginf\end{tikzpicture}}
\newcommand{\nodeinfdota}{\begin{tikzpicture}[scale=0.35]\tanginfdota\end{tikzpicture}}
\newcommand{\nodeinfdotb}{\begin{tikzpicture}[scale=0.35]\tanginfdotb\end{tikzpicture}}
\newcommand{\nodeinfdotab}{\begin{tikzpicture}[scale=0.35]\tanginfdotab\end{tikzpicture}}


\newcommand{\nodeinfsm}{\begin{tikzpicture}[scale=0.5]\tanginf\end{tikzpicture}}
\newcommand{\nodezerosm}{\begin{tikzpicture}[scale=0.5]\tangzero\end{tikzpicture}}

\newcommand{\nodeclownhsm}{\begin{tikzpicture}[scale=0.5]\clownhorz\end{tikzpicture}}
\newcommand{\nodemhzeroinfsm}{\begin{tikzpicture}[scale=0.5]\mixhorzzeroinf\end{tikzpicture}}
\newcommand{\nodemhinfzerosm}{\begin{tikzpicture}[scale=0.5]\mixhorzinfzero\end{tikzpicture}}
\newcommand{\nodemhzerozerosm}{\begin{tikzpicture}[scale=0.5]\mixhorzzerozero\end{tikzpicture}}


\newcommand{\saddleh}{\tanginf \draw[very thick,-] (-0.28,0) -- (0.28,0);}
\newcommand{\saddlev}{\tangzero \draw[very thick,-] (0,0.28) -- (0,-0.28);}
\newcommand{\nodesaddleh}{\begin{tikzpicture}[scale=0.35]\saddleh\end{tikzpicture}}
\newcommand{\nodesaddlev}{\begin{tikzpicture}[scale=0.35]\saddlev\end{tikzpicture}}


\newcommand{\clownvert}{\begin{scope}[very thick] \draw[-] (-0.5,-1) .. controls (-0.2,-0.7) and (0.2,-0.7) .. (0.5,-1);\draw[-] (-0.5,1) .. controls (-0.2,.7) and (0.2,.7) .. (0.5,1);\draw[very thick] (0,0) circle (0.4cm);\end{scope}}
\newcommand{\clownhorz}{\begin{scope}[very thick] \draw[-] (-1,0.5) .. controls (-0.7,0.2) and (-0.7,-0.2) .. (-1,-0.5);\draw[-] (1,0.5) .. controls (0.7,0.2) and (0.7,-0.2) .. (1,-0.5);\draw[very thick] (0,0) circle (0.4cm);\end{scope}}

\newcommand{\clownhorzdota}{\clownhorz \fill (-0.75,0) circle (6pt);}
\newcommand{\clownhorzdotb}{\clownhorz \fill (-0.38,0) circle (6pt);}
\newcommand{\clownvertdota}{\clownvert \fill (0,0.75) circle (6pt);}
\newcommand{\clownvertdotb}{\clownvert \fill (0,0.38) circle (6pt);}
\newcommand{\nodeclownhdota}{\begin{tikzpicture}[scale=0.35]\clownhorzdota\end{tikzpicture}}
\newcommand{\nodeclownhdotb}{\begin{tikzpicture}[scale=0.35]\clownhorzdotb\end{tikzpicture}}
\newcommand{\nodeclownvdota}{\begin{tikzpicture}[scale=0.35]\clownvertdota\end{tikzpicture}}
\newcommand{\nodeclownvdotb}{\begin{tikzpicture}[scale=0.35]\clownvertdotb\end{tikzpicture}}

\newcommand{\nodeclownv}{\begin{tikzpicture}[scale=0.7]\clownvert\end{tikzpicture}}
\newcommand{\nodeclownh}{\begin{tikzpicture}[scale=0.7]\clownhorz\end{tikzpicture}}

\newcommand{\clownhorzdeloop}{\begin{scope}[very thick] \draw[-] (-1,0.5) .. controls (-0.7,0.2) and (-0.7,-0.2) .. (-1,-0.5);\draw[-] (1,0.5) .. controls (0.7,0.2) and (0.7,-0.2) .. (1,-0.5);\end{scope}}

\newcommand{\nodeclownhdeloop}{\begin{tikzpicture}[scale=0.7]\clownhorzdeloop\end{tikzpicture}}


\newcommand{\clownhspike}{\clownhorz \foreach \w in {-135,-45,45,135} {\draw[very thick,-] (\w:0.4) -- (\w:0.65);}}
\newcommand{\clownhspa}{\clownhspike \fill (-0.78,0) circle (6pt);}
\newcommand{\clownhspb}{\clownhspike \fill (0,0) circle (5pt);}
\newcommand{\clownhspab}{\clownhspa \fill (0,0) circle (5pt);}
\newcommand{\clownhspac}{\clownhspa \fill (0.78,0) circle (6pt);}
\newcommand{\clownhspbc}{\clownhspb \fill (0.78,0) circle (6pt);}

\newcommand{\nodeclownhspa}{\begin{tikzpicture}[scale=0.35]\clownhspa\end{tikzpicture}}
\newcommand{\nodeclownhspb}{\begin{tikzpicture}[scale=0.35]\clownhspb\end{tikzpicture}}
\newcommand{\nodeclownhspab}{\begin{tikzpicture}[scale=0.35]\clownhspab\end{tikzpicture}}
\newcommand{\nodeclownhspac}{\begin{tikzpicture}[scale=0.35]\clownhspac\end{tikzpicture}}
\newcommand{\nodeclownhspbc}{\begin{tikzpicture}[scale=0.35]\clownhspbc\end{tikzpicture}}


\newcommand{\mixhorzzeroinf}{\begin{scope}[very thick]
\draw[-] (1.0,0.5) .. controls (0.7,0.2) and (0.7,-0.2) .. (1,-0.5);
\draw[-] (-1,0.5) .. controls (-0.4,0) and (-0.2,0.5) .. (0,0.5) .. controls (0.4,0.5) and (0.4,-0.5) .. (0,-0.5) .. controls (-0.2,-0.5) and (-0.4,0) .. (-1,-0.5);\end{scope}}

\newcommand{\mixhorzinfzero}{\begin{scope}[very thick]
\draw[-] (-1.0,0.5) .. controls (-0.7,0.2) and (-0.7,-0.2) .. (-1,-0.5);
\draw[-] (1,0.5) .. controls (0.4,0) and (0.2,0.5) .. (0,0.5) .. controls (-0.4,0.5) and (-0.4,-0.5) .. (0,-0.5) .. controls (0.2,-0.5) and (0.4,0) .. (1,-0.5);\end{scope}}

\newcommand{\mixhorzzerozero}{\begin{scope}[very thick]
\draw[-] (-1,0.5) .. controls (-0.4,0) and (-0.2,0.5) .. (0,0.5) .. controls (0.2,0.5) and (0.4,0) .. (1,0.5);
\draw[-] (-1,-0.5) .. controls (-0.4,0) and (-0.2,-0.5) .. (0,-0.5) .. controls (0.2,-0.5) and (0.4,0) .. (1,-0.5);\end{scope}}

\newcommand{\nodemhzeroinf}{\begin{tikzpicture}[scale=0.7]\mixhorzzeroinf\end{tikzpicture}}
\newcommand{\nodemhinfzero}{\begin{tikzpicture}[scale=0.7]\mixhorzinfzero\end{tikzpicture}}
\newcommand{\nodemhzerozero}{\begin{tikzpicture}[scale=0.7]\mixhorzzerozero\end{tikzpicture}}

\newcommand{\nodemvinfzero}{\begin{tikzpicture}[scale=0.7,rotate around={-90:(0,0)}]\mixhorzzeroinf\end{tikzpicture}}
\newcommand{\nodemvzeroinf}{\begin{tikzpicture}[scale=0.7,rotate around={90:(0,0)}]\mixhorzzeroinf\end{tikzpicture}}
\newcommand{\nodemvinfinf}{\begin{tikzpicture}[scale=0.7,rotate around={90:(0,0)}]\mixhorzzerozero\end{tikzpicture}}


\newcommand{\zeroinfsad}{\mixhorzzeroinf \draw[very thick,-] (0.3,0) -- (0.78,0);}
\newcommand{\infzerosad}{\mixhorzinfzero \draw[very thick,-] (-0.78,0) -- (-0.3,0);}

\newcommand{\nodezeroinfsad}{\begin{tikzpicture}[scale=0.35]\zeroinfsad\end{tikzpicture}}
\newcommand{\nodeinfzerosad}{\begin{tikzpicture}[scale=0.35]\infzerosad\end{tikzpicture}}

\newcommand{\nodemvinfinfsadu}{\begin{tikzpicture}[scale=0.35,rotate around={90:(0,0)}]\mixhorzzerozero \draw[very thick,-] (-0.5,0.28) -- (-0.5,-0.28);\end{tikzpicture}}

\newcommand{\nodevzeroinfsadu}{\begin{tikzpicture}[scale=0.35,rotate around={90:(0,0)}]\zeroinfsad\end{tikzpicture}}
\newcommand{\nodevzeroinfsadd}{\begin{tikzpicture}[scale=0.35,rotate around={90:(0,0)}]\mixhorzzeroinf \draw[very thick,-] (-0.5,0.28) -- (-0.5,-0.28);\end{tikzpicture}}

\newcommand{\clownhsadl}{\clownhorz \draw[very thick,-] (-0.4,0) -- (-0.78,0);}
\newcommand{\clownhsadr}{\clownhorz \draw[very thick,-] (0.4,0) -- (0.78,0);}

\newcommand{\nodeclownsadl}{\begin{tikzpicture}[scale=0.35]\clownhsadl\end{tikzpicture}}
\newcommand{\nodeclownsadr}{\begin{tikzpicture}[scale=0.35]\clownhsadr\end{tikzpicture}}

\newcommand{\nodeclownvsadu}{\begin{tikzpicture}[scale=0.35,rotate around={90:(0,0)}]\clownhsadr\end{tikzpicture}}


\newcommand{\nodecurvyline}{\begin{tikzpicture}[very thick,scale=0.8]\draw [out=60,in=-60] (0,-0.5) to (0,0.5);\end{tikzpicture}}
\newcommand{\nodecurvylinedot}{\begin{tikzpicture}[scale=0.35,very thick]\draw [out=60,in=-60,-] (0,-0.5) to (0,0.5);	\fill (0.15,0) circle (5pt);\end{tikzpicture}}

\newcommand{\zdashz}{\begin{tikzpicture}[very thick,scale=.7]\draw (0,0) -- (1,0);\filldraw[fill=white,very thick] (0,0) circle (4.5pt); \filldraw[fill=white,very thick] (1,0) circle (4.5pt);\end{tikzpicture}}

\newcommand{\circleprop}{\begin{tikzpicture}[semithick,scale=0.13]\draw (0,0) circle (1cm);\end{tikzpicture}}

\newcommand{\nodedottedcircle}{\begin{tikzpicture}\draw[very thick] (0,0) circle (0.175); \fill[scale=0.35] (-0.45,0) circle (6pt);\end{tikzpicture}}

\newcommand{\minione}{\begin{tikzpicture}[scale=0.3, line width=0.7pt]\draw (0,0) -- (1,1);\draw (0,1) -- (0.35,0.65);\draw (0.65,0.35) -- (1,0);\end{tikzpicture}}
\newcommand{\miniinf}{\begin{tikzpicture}[scale=0.3, line width=0.7pt,rotate around={90:(0.5,0.5)}]\draw (0,0) .. controls (0.5,0.4) and (0.5,0.4) .. (1,0);\draw (0,1) .. controls (0.5,0.6) and (0.5,0.6) .. (1,1);\end{tikzpicture}}
\newcommand{\minizero}{\begin{tikzpicture}[scale=0.3, line width=0.7pt]\draw (0,0) .. controls (0.5,0.4) and (0.5,0.4) .. (1,0);\draw (0,1) .. controls (0.5,0.6) and (0.5,0.6) .. (1,1);\end{tikzpicture}}
\newcommand{\mininegone}{\begin{tikzpicture}[scale=0.3, line width=0.7pt,rotate around={90:(0.5,0.5)}]\draw (0,0) -- (1,1);\draw (0,1) -- (0.35,0.65);		\draw (0.65,0.35) -- (1,0);\end{tikzpicture}}
\newcommand{\minisaddlev}{\begin{tikzpicture}[scale=0.3, line width=0.7pt,-]\draw (0,0) .. controls (0.5,0.4) and (0.5,0.4) .. (1,0);\draw (0,1) .. controls (0.5,0.6) and (0.5,0.6) .. (1,1);\draw (0.5,0.3) -- (0.5,0.70);\end{tikzpicture}}
\newcommand{\minisaddleh}{\begin{tikzpicture}[scale=0.3, line width=0.7pt,rotate around={90:(0.5,0.5)},-]\draw (0,0) .. controls (0.5,0.4) and (0.5,0.4) .. (1,0);\draw (0,1) .. controls (0.5,0.6) and (0.5,0.6) .. (1,1);\draw (0.5,0.3) -- (0.5,0.70);\end{tikzpicture}}

\newcommand{\loopneg}{\begin{scope}[very thick]\draw [out=40,in=-110,looseness=0.7] (0,0) to (0.8,0.8) [out=70,in=110,looseness=2.7] to (0.2,0.8) [out=-70,in=140,looseness=0.7] to (1,0);\draw[out=40,in=-110,looseness=0.7,white,line width=7pt] (0,0) to (0.79,0.79);\draw[out=40,in=-110,looseness=0.7] (0,0) to (0.8,0.8);\end{scope}}

\newcommand{\looppos}{\begin{scope}[very thick,xscale=-1]\draw [out=40,in=-110,looseness=0.7] (0,0) to (0.8,0.8) [out=70,in=110,looseness=2.7] to (0.2,0.8) [out=-70,in=140,looseness=0.7] to (1,0);\draw[out=40,in=-110,looseness=0.7,white,line width=7pt] (0,0) to (0.79,0.79);\draw[out=40,in=-110,looseness=0.7] (0,0) to (0.8,0.8);\end{scope}}

\newcommand{\loopsmooth}{\begin{scope}[very thick,-]\draw[out=45,in=-60,looseness=1.3] (-0.5,0) to (-0.3,0.8) [out=120,in=60,looseness=3.5] to (0.3,0.8) [out=-120,in=135,looseness=1.3] to (0.5,0);\end{scope}}


\newcommand{\UYcircle}{\begin{tikzpicture}[very thick,scale=0.6]\draw [out=180,in=-120] (0:1) to (60:1) [out=-60,in=0] (120:1) to (180:1) [out=60,in=120] (-120:1) to (-60:1);\draw (0,0) circle (0.35cm);\end{tikzpicture}}

\newcommand{\Ycircle}{\begin{tikzpicture}[very thick,scale=0.6,rotate=60]\draw [out=180,in=-120] (0:1) to (60:1) [out=-60,in=0] (120:1) to (180:1) [out=60,in=120] (-120:1) to (-60:1);\draw (0,0) circle (0.35cm);\end{tikzpicture}}

\newcommand{\UYdeloop}{\begin{tikzpicture}[very thick,scale=0.6]\draw [out=180,in=-120] (0:1) to (60:1) [out=-60,in=0] (120:1) to (180:1) [out=60,in=120] (-120:1) to (-60:1);\end{tikzpicture}}

\newcommand{\Ydeloop}{\begin{tikzpicture}[very thick,scale=0.6,rotate=60]\draw [out=180,in=-120] (0:1) to (60:1) [out=-60,in=0] (120:1) to (180:1) [out=60,in=120] (-120:1) to (-60:1);\end{tikzpicture}}

\newcommand{\Bslh}{\begin{tikzpicture}[very thick,scale=0.6]\draw (0:1) -- (180:1);\draw [out=-120,in=-60] (60:1) to (120:1) [out=60, in=120] (-120:1) to (-60:1);\end{tikzpicture}}

\newcommand{\Bslr}{\begin{tikzpicture}[very thick,scale=0.6,rotate=60]\draw (0:1) -- (180:1);\draw [out=-120,in=-60] (60:1) to (120:1) [out=60, in=120] (-120:1) to (-60:1);\end{tikzpicture}}

\newcommand{\Bsll}{\begin{tikzpicture}[very thick,scale=0.6,rotate=120]\draw (0:1) -- (180:1);\draw [out=-120,in=-60] (60:1) to (120:1) [out=60, in=120] (-120:1) to (-60:1);\end{tikzpicture}}

\newcommand{\UYbitb}{\begin{tikzpicture}[very thick,scale=0.6]\draw [out=180,in=-120] (0:1) to (60:1) [out=-60,in=0] (120:1) to (180:1);\draw [out=45,in=-60,looseness=1.5] (-120:1) to (-150:0.35) [out=120,in=60,looseness=3] to (-30:0.35) [out=-120,in=135,looseness=1.5] to (-60:1);\end{tikzpicture}}

\newcommand{\UYbitr}{\begin{tikzpicture}[very thick,scale=0.6,rotate=120]\draw [out=180,in=-120] (0:1) to (60:1) [out=-60,in=0] (120:1) to (180:1);\draw [out=45,in=-60,looseness=1.5] (-120:1) to (-150:0.35) [out=120,in=60,looseness=3] to (-30:0.35) [out=-120,in=135,looseness=1.5] to (-60:1);\end{tikzpicture}}

\newcommand{\UYbitl}{\begin{tikzpicture}[very thick,scale=0.6,rotate=-120]\draw [out=180,in=-120] (0:1) to (60:1) [out=-60,in=0] (120:1) to (180:1);\draw [out=45,in=-60,looseness=1.5] (-120:1) to (-150:0.35) [out=120,in=60,looseness=3] to (-30:0.35) [out=-120,in=135,looseness=1.5] to (-60:1);\end{tikzpicture}}

\newcommand{\Ybitu}{\begin{tikzpicture}[very thick,scale=0.6,yscale=-1]\draw [out=180,in=-120] (0:1) to (60:1) [out=-60,in=0] (120:1) to (180:1);\draw [out=45,in=-60,looseness=1.5] (-120:1) to (-150:0.35) [out=120,in=60,looseness=3] to (-30:0.35) [out=-120,in=135,looseness=1.5] to (-60:1);\end{tikzpicture}}

\newcommand{\Ybitl}{\begin{tikzpicture}[very thick,scale=0.6,yscale=-1,rotate=-120]\draw [out=180,in=-120] (0:1) to (60:1) [out=-60,in=0] (120:1) to (180:1);\draw [out=45,in=-60,looseness=1.5] (-120:1) to (-150:0.35) [out=120,in=60,looseness=3] to (-30:0.35) [out=-120,in=135,looseness=1.5] to (-60:1);\end{tikzpicture}}

\newcommand{\Ybitr}{\begin{tikzpicture}[very thick,scale=0.6,yscale=-1,rotate=120]\draw [out=180,in=-120] (0:1) to (60:1) [out=-60,in=0] (120:1) to (180:1);\draw [out=45,in=-60,looseness=1.5] (-120:1) to (-150:0.35) [out=120,in=60,looseness=3] to (-30:0.35) [out=-120,in=135,looseness=1.5] to (-60:1);\end{tikzpicture}}

\newcommand{\UYbitbsad}{\begin{tikzpicture}[very thick,scale=0.35,-]\draw [out=180,in=-120] (0:1) to (60:1) [out=-60,in=0] (120:1) to (180:1);\draw [out=45,in=-60,looseness=1.5] (-120:1) to (-150:0.35) [out=120,in=60,looseness=3] to (-30:0.35) [out=-120,in=135,looseness=1.5] to (-60:1);\draw (-112:0.51) -- (-68:0.51);\end{tikzpicture}}

\newcommand{\UYbitrsad}{\begin{tikzpicture}[very thick,scale=0.35,rotate=120,-]\draw [out=180,in=-120] (0:1) to (60:1) [out=-60,in=0] (120:1) to (180:1);\draw [out=45,in=-60,looseness=1.5] (-120:1) to (-150:0.35) [out=120,in=60,looseness=3] to (-30:0.35) [out=-120,in=135,looseness=1.5] to (-60:1);\draw (-112:0.51) -- (-68:0.51);\end{tikzpicture}}

\newcommand{\UYcirclesadl}{\begin{tikzpicture}[very thick,scale=0.35,-]\draw [out=180,in=-120] (0:1) to (60:1) [out=-60,in=0] (120:1) to (180:1) [out=60,in=120] (-120:1) to (-60:1);\draw (0,0) circle (0.35cm);\draw(150:0.35) -- ++ (150:0.27);\end{tikzpicture}}

\newcommand{\UYdotl}{\begin{tikzpicture}[very thick,scale=0.35,-]\draw [out=180,in=-120] (0:1) to (60:1) [out=-60,in=0] (120:1) to (180:1) [out=60,in=120] (-120:1) to (-60:1);\fill (150:0.62) circle (6pt);\end{tikzpicture}}

\newcommand{\UYdotr}{\begin{tikzpicture}[very thick,scale=0.35,-]\draw [out=180,in=-120] (0:1) to (60:1) [out=-60,in=0] (120:1) to (180:1) [out=60,in=120] (-120:1) to (-60:1);\fill (30:0.62) circle (6pt);\end{tikzpicture}}

\newcommand{\UYdotb}{\begin{tikzpicture}[very thick,scale=0.35,-]\draw [out=180,in=-120] (0:1) to (60:1) [out=-60,in=0] (120:1) to (180:1) [out=60,in=120] (-120:1) to (-60:1);\fill (-90:0.62) circle (6pt);\end{tikzpicture}}



\newcommand{\Z}{\mathbb{Z}}

\newcommand{\Q}{\mathbb{Q}}

\newcommand{\Oo}{\mathcal{O}}
\newcommand{\inv}{^{-1}}
\newcommand{\la}{\langle}
\newcommand{\ra}{\rangle}

\DeclareMathOperator{\Mat}{Mat}

\DeclareMathOperator{\Kom}{Kom}

\DeclareMathOperator{\BN}{BN}

\DeclareMathOperator{\Hopf}{Hopf}
\DeclareMathOperator{\qdim}{qdim}

\newcommand{\Cc}{\mathcal{C}}

\newcommand{\Cob}{\Cc ob^3}

\newcommand{\Cobbl}{\Cob_{\bullet/l}}

\newcommand{\Kb}[1]{\text{\textlbrackdbl} #1 \text{\textrbrackdbl}}
\newcommand{\Kbp}[1]{\text{\emph{\textlbrackdbl}} #1 \text{\emph{\textrbrackdbl}}}

\newcommand{\Kh}{Kh}

\newtheoremstyle{dotless_one}{}{}{\itshape}{}{\bfseries}{}{ }{}
\theoremstyle{dotless_one}

\newtheorem{thm}{Theorem}[section]
\newtheorem{coro}[thm]{Corollary}
\newtheorem{lemma}[thm]{Lemma}
\newtheorem{prop}[thm]{Proposition}

\newtheoremstyle{dotless_two}{}{}{\upshape}{}{\bfseries}{}{ }{}
\theoremstyle{dotless_two}
\newtheorem{defin}[thm]{Definition}
\newtheorem{example}[thm]{Example}


\makeatletter
\renewenvironment{proof}[1][\proofname]{\par
  \pushQED{\qed}%
  \normalfont \topsep6\p@\@plus6\p@\relax
  \trivlist
  \item[\hskip\labelsep
        \itshape
   #1\@addpunct{:}]\ignorespaces
}{%
  \popQED\endtrivlist\@endpefalse
}
\makeatother



\usepackage{enumitem}
\setlist{noitemsep}

\title{\bfseries \Large Khovanov complexes of rational tangles}
\author{Benjamin Thompson}
\date{}

\begin{document}
\pagestyle{fancy}
\pagenumbering{arabic}
\setcounter{page}{1}

\maketitle

\begin{abstract}
We show that the Khovanov complex of a rational tangle has a very simple representative whose backbone of non-zero morphisms forms a zig-zag. Furthermore, this minimal complex can be computed quickly by an inductive algorithm. (For example, we calculate $\Kh(8_2)$ by hand.) We find that the bigradings of the subobjects in these minimal complexes can be described by matrix actions, which after a change of basis is the reduced Burau representation of $B_3$.
\end{abstract}

\section{Introduction}

In \cite{TangleAndCobords}, Dror Bar-Natan reformulated the Khovanov homology of tangles in such a way that the Khovanov bracket was essentially local. He then introduced a delooping isomorphism in \cite{FastKh} which together with some slight modifications to the underlying theory, was able to rapidly simplify the Khovanov complex of a link allowing the homology groups to be computed quickly.

In this note we show that the Khovanov complex of a rational tangle has an unexpectedly simple representative (the minimal complex), which is a zig-zag complex (Theorem~\ref{thm-main}). Furthermore, the bigrading information of the subobjects in this minimal complex can be described by matrix actions which after a change of basis is the Burau representation (Corollary~\ref{coro-braid-rels}). The ramifications of this observation remain unclear.

The proof is inductive in that it relies on the fact that every rational tangle can be constructed by adding crossings sequentially, and that each time a crossing is added, we simplify the result to a zig-zag complex. The proof that the bigradings are described by matrix actions is essentially a counting argument based on the previous proof.

We describe our definition of a zig-zag complex below, but the core idea is that the backbone of non-zero maps between the subobjects of the complex collectively form a zig-zag.
\[
	\begin{tikzpicture}[auto,scale=0.6]
\node (2) at (0,0) {$\Z$};
\node (3a) at (3,1) {$\Z$};
\node (3m) at (3,0) {$\oplus$};
\node (3b) at (3,-1) {$\Z$};
\node (4a) at (6,1) {$\Z$};
\node (4m) at (6,0) {$\oplus$};
\node (4b) at (6,-1) {$\Z$};
\node (5) at (9,0) {$\Z$};

\draw[->] (2) to node[swap] {\footnotesize $2$} (3b);
\draw[->] (3b) to node[swap] {\footnotesize $2$} (4b);
\draw[->] (3a) to node {\footnotesize $3$} (4b);
\draw[->] (3a) to node {\footnotesize $4$} (4a);
\draw[->] (4a) to node {\footnotesize $2$} (5);
\end{tikzpicture}
\]

\begin{defin}\label{def-zig-zag}
	Let $\Cc$ be a preadditive category, and let $\Mat(\Cc)$ be the additive closure of $\Cc$ (i.e. the category of formal direct sums of objects in $\Cc$ with composition of morphisms given by matrix multiplication).
	Let $(\Omega,\partial)$ be a representative of a complex in $\Kom(\Mat(\Cc))$, the category of complexes in $\Mat(\Cc)$ considered up homotopy equivalence. Fix a particular direct sum decomposition of $(\Omega, \partial)$.
	We call a subobject $\Omega_j^i$ of the complex a \emph{z-end} if there is precisely one non-zero map from either some subobject of $\Omega^{i-1}$ to it, or from it to some subobject of $\Omega^{i+1}$. We call a subobject $\Omega_j^i$ a \emph{z-middle} of the complex if either: there is precisely one non-zero map from some subobject of $\Omega^{i-1}$ to $\Omega_j^i$ and precisely one non-zero map from $\Omega_j^i$ to some subobject of $\Omega^{i+1}$; or the map from $\Omega^{i-1}$ to $\Omega_j^i$ is zero and there are precisely two non-zero maps from $\Omega_j^i$ to subobjects of $\Omega^{i+1}$; or there are precisely two non-zero maps from subobjects of $\Omega^{i-1}$ to $\Omega_j^i$ and the map from $\Omega_j^i$ to $\Omega^{i+1}$ is zero.

	We say $(\Omega,\partial)$ is a \emph{zig-zag complex} if every subobject in the complex is either a $z$-end or a $z$-middle, and precisely two subobjects are $z$-ends.
\end{defin}

The structure of the article is as follows. In Section~\ref{sec-RTs} we quickly review the essentials of rational tangles, and then in Section~\ref{sec-Kh} review the Khovanov homology theory of tangles we work with. The main content begins in section~\ref{sec-KhRts}, where we compute the minimal complex of an integer tangle, introduce a `square' isomorphism, then combine the two to prove Theorem~\ref{thm-main}, the first of our main results. After some counting, we then establish Theorem~\ref{thm-mat-actions}, our other result regarding matrix actions.

\subsection*{Acknowledgements}
The bulk of this paper derives from an Honours thesis of the same name which would not have been possible without much help and support. I wish to thank my excellent supervisor Scott Morrison for providing the topic and general direction of the project, and Tony Licata, whose idea regarding matrix actions lead to Section~\ref{sec-burau}.

\section{Rational tangles primer}\label{sec-RTs}
We briefly describe the essentials concerning rational tangles. More detailed exposition can be found, for instance, in \cite{KauffRTs}.

Rational tangles are a subclass of tangles with four boundary points (4-point tangles). On these, one can define binary operations referred to as \emph{addition} $(+)$ and \emph{multiplication} $(\ast)$ illustrated below in Figure~\ref{fig-def-add_mult}.

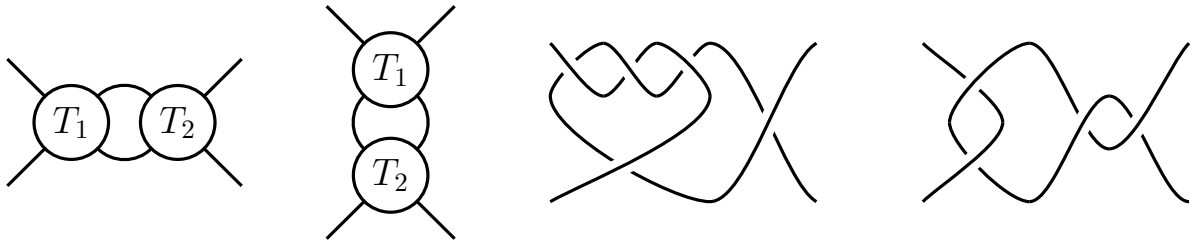
\begin{figure}[ht!]
	\centering
	\begin{tikzpicture}[very thick,scale=0.7]
\begin{scope}[xshift=-5cm,rotate around={90:(0,0)}]
\draw (0,1) circle (0.7cm);
\draw (0,-1) circle (0.7cm);
\node (T1) at (0,1) {\large $T_1$};
\node (T2) at (0,-1) {\large $T_2$};	
\begin{scope}[yshift=-1cm] 
\draw (0,2) ++
(-45:.7)  to [out=-45,in=45,looseness=1]
(45:0.7);
\draw (0,2) ++ (-135:0.7) to [out=-135,in=135,
looseness=1] (135:0.7);
\end{scope}
\draw (0,1) ++ (135:0.7) -- ++ (135:1);
\draw (0,1) ++ (45:0.7) -- ++ (45:1);
\draw (0,-1) ++ (-135:0.7) -- ++ (-135:1);
\draw (0,-1) ++ (-45:0.7) -- ++ (-45:1);
\end{scope}

\begin{scope}
\draw (0,1) circle (0.7cm);
\draw (0,-1) circle (0.7cm);
\node (T1) at (0,1) {\large $T_1$};
\node (T2) at (0,-1) {\large $T_2$};	
\begin{scope}[yshift=-1cm] 
\draw (0,2) ++
(-45:.7)  to [out=-45,in=45,looseness=1]
(45:0.7);
\draw (0,2) ++ (-135:0.7) to [out=-135,in=135,
looseness=1] (135:0.7);
\end{scope}
\draw (0,1) ++ (135:0.7) -- ++ (135:1);
\draw (0,1) ++ (45:0.7) -- ++ (45:1);
\draw (0,-1) ++ (-135:0.7) -- ++ (-135:1);
\draw (0,-1) ++ (-45:0.7) -- ++ (-45:1);
\end{scope}

\begin{scope}[yshift=-1.5cm,xshift=3cm]
\begin{scope}[looseness=0.5]
\draw (0,3) [out=-45,in=180] to (1,2) [out=0,in=180] to (2,3) [out=0,in=90] to (3,2) [out=-90,in=30] to (0,0);
\draw (5,0) [out=150,in=0] to (3,3) [out=180,in=0] to (2,2) [out=180,in=0] to (1,3) [out=180,in=90] to (0,2) [out=-90,in=180] to (3,0) [out=0,in=-150] to (5,3);

\draw (0,3) [white,line width=7pt,out=-45,in=180] to (1,2);
\draw (0,3) [out=-45,in=180] to (1,2);
\draw (1,3) [white,line width=7pt,out=0,in=180] to (2,2);
\draw (1,3) [out=0,in=180] to (2,2);
\draw (2,3) [white,line width=7pt,out=0,in=90] to (3,2);
\draw (2,3) [out=0,in=90] to (3,2);
\draw (0,0) [white,line width=7pt,out=30,in=-90] to (3,2);
\draw (0,0) [out=30,in=-90] to (3,2);
\draw (3,0) [white,line width=7pt,out=0,in=-150] to (5,3);
\draw (3,0) [out=0,in=-150] to (5,3);
\end{scope}

\begin{scope}[xshift=7cm,looseness=0.5,rotate around={180:(2.5,1.5)}]
\draw (0,0) [out=45,in=180] to (1.5,2) [out=0,in=180] to (3,0) [out=0,in=-90] to (4.5,1.5) [out=90,in=0] to (3,3) [out=180,in=0] to (1.5,1) [in=0,out=180] to (0,3);
\draw (5,0) [out=135,in=-90] to (3.5,1.5) [out=90,in=-135] to (5,3);

\draw (0,0) [white,line width=7pt,out=45,in=180] to (1.5,2);
\draw (0,0) [out=45,in=180] to (1.5,2);
\draw (1.5,1) [white,line width=7pt,out=0,in=180] to (3,3);
\draw (1.5,1) [out=0,in=180] to (3,3);
\draw (3,0) [white,line width=7pt,out=0,in=-90] to (4.5,1.5);
\draw (3,0) [out=0,in=-90] to (4.5,1.5);
\draw (3.5,1.5) [white,line width=7pt,out=90,in=-135] to (5,3);
\draw (3.5,1.5) [out=90,in=-135] to (5,3);
\end{scope}
\end{scope}

\end{tikzpicture}
	\caption{\textsc{from left to right} The sum ($+$) and product ($*$) of 4-point tangles $T_1$ and $T_2$. The rational tangles $\la -3,1,1 \ra$ and $\la 1,1,2 \ra$.}
	\label{fig-def-add_mult}
\end{figure}

\begin{defin}
\label{def-twist_form}
Let $[0]$ and $[\infty]$ refer to the $\minizero$ and $\miniinf$ tangles respectively. Let $[\pm 1]$ refer to the single-crossing 4-point tangles comprised of diagonal strands where the sign indicates the gradient of the overcrossing. Then a tangle is \emph{rational} if it is isotopic to a tangle created from $[0]$ or $[\infty]$ by a finite sequence of additions and multiplications with the tangles $[\pm 1]$.
\end{defin}

Using the notation $[n]:= [1] + \ldots + [1]$ and $[\overline{n}] = [1] * \ldots * [1]$, we can write a rational tangle as an expression
\[
	\left( \ldots ((([a_1] * [\overline{a_2}]) + [a_3] ) * [\overline{a_4}] ) + \ldots * [\overline{a_{n-1}}]) + [a_n]
	 \right.
\]
where $a_i$ are integers. We will abbreviate such an expression as $\la a_1, \ldots, a_n \ra $.

The reasoning behind the name of these tangles stems from the fact that each can be assigned a rational number which is a complete invariant of the tangle.

\begin{defin}\label{def-tang-frac}
Let $T$ be a rational tangle isotopic to $\la a_n, a_{n-1},\ldots a_1 \ra$. Define the \emph{fraction} or \emph{tangle fraction} $F(T)$ of $T$ to be the rational number
\begin{equation}
	F(T) = a_1   + \frac{1}{a_2      +\frac{1}{a_3       + \ldots + \frac{1}{a_{n-1} + \frac{1}{a_n}}}}.
\end{equation}
If $T = [\infty]$ we define $F([\infty]) = \infty$.
\end{defin}

\begin{thm}
\label{thm-RT_class}
The tangle fraction is well-defined, and two rational tangles are isotopic if and only if they have the same fraction.
\end{thm}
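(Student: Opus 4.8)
The plan is to prove the two implications separately, since the statement really packages three claims: that $F$ descends to isotopy classes (well-definedness), that $S\sim T \Rightarrow F(S)=F(T)$ (which is the same assertion), and the converse $F(S)=F(T)\Rightarrow S\sim T$. Taken together they say that $F$ induces a bijection between isotopy classes of rational tangles and $\Q\cup\{\infty\}$.

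I would dispatch the converse first, as it is purely algebraic. Given $F(S)=F(T)=p/q$, the goal is to show that every rational tangle with fraction $p/q$ is isotopic to a single canonical tangle depending only on $p/q$. Running the (subtractive) Euclidean algorithm on $p/q$ produces a distinguished continued fraction $[a_1,\dots,a_n]$; by Lemma~\ref{lem-fraction_form_props} the elementary operations building it ($T\mapsto T+[\pm1]$ and $T\mapsto 1/T$) change the fraction exactly as the algorithm prescribes. Any two continued-fraction presentations of the same rational number are related by a finite list of continued-fraction identities (absorbing a trailing $\pm1$, inserting or deleting a $0$ entry, sign normalisation), and each such identity is realised by an honest tangle isotopy coming from the corollaries to Proposition~\ref{prop-flipping} (namely $[\pm1]+T\sim T+[\pm1]$, $[\pm1]\ast T\sim T\ast[\pm1]$, and the flips $T\sim T^h\sim T^v$). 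Hence both $S$ and $T$ reduce to the canonical tangle for $p/q$ (Corollary~\ref{coro-canonical-form}), so $S\sim T$.

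The forward implication is the substantial one, and here I would invoke the flyping machinery the paper foreshadows. Every rational tangle admits an alternating diagram — choose the continued fraction with all entries of one sign — so by the rational-tangle form of the flyping theorem (deduced from the Tait conjecture of Menasco--Thistlethwaite, \cite{Thistle}, together with Theorem~\ref{thm-reid}) any two alternating diagrams of isotopic rational tangles differ by planar isotopy and a finite sequence of flypes. It therefore suffices to show that $F$ is unchanged by a single flype, i.e. $F([\pm1]+T)=F(T^h+[\pm1])$ and likewise for $\ast$. Using the fraction properties this reduces to the identity $F(T)=F(T^h)$, which I would establish by induction on the crossing number: the flyped subtangle $T$ has strictly fewer crossings, $T\sim T^h$ by Proposition~\ref{prop-flipping}, and the inductive hypothesis gives $F(T)=F(T^h)$. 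Flypes on proper subtangles are handled by the same induction, so $F$ is flype-invariant and hence an isotopy invariant.

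The main obstacle is precisely this forward direction. The converse is bookkeeping with continued fractions, but proving that \emph{no} isotopy can alter the fraction requires knowing that the isotopies of an alternating rational tangle are generated by flypes — a fact resting on the deep Tait/Menasco--Thistlethwaite theorem rather than on anything elementary. The subtlety to watch is the apparent circularity in computing $F(T^h)$ while trying to prove isotopy invariance of $F$; the induction on crossing number is what breaks it, and setting that induction up cleanly, so that every flype in the decomposition acts on a tangle strictly smaller than the ambient one, is the delicate point.
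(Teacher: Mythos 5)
The paper does not actually prove this theorem---it explicitly omits the proof and defers to \cite{KauffRTs}---but your sketch follows precisely the strategy the paper outlines in the surrounding discussion: the converse via continued-fraction arithmetic and the canonical form of Corollary~\ref{coro-canonical-form}, and the forward direction via alternating diagrams, the Menasco--Thistlethwaite flyping theorem, and flype-invariance of $F$ resting on Proposition~\ref{prop-flipping}. Your outline is consistent with the Kauffman--Lambropoulou argument the paper cites, and you correctly identify the one delicate point (breaking the circularity in $F(T)=F(T^h)$ by induction on crossing number), so there is nothing to fault beyond the fact that, like the paper, you are deferring the deep flyping theorem rather than proving it.
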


\begin{example}
The rational tangles in Figure~\ref{fig-def-add_mult} above both have a tangle fraction equal to $5/2$, so are in fact isotopic. This may not be immediately evident from inspection.
\end{example}

\begin{defin}
A continued fraction such as that in Defintion~\ref{def-tang-frac} is said to be in \emph{canonical form} if $n$ is odd and either $a_1 \geq 0$ and $a_i>0$, or $a_1\leq 0$ and $a_i<0$ (for $i \ne 1$).
\end{defin}

It is not difficult to see that every rational number has a unique continued fraction in canonical form, which gives us the following result.

\begin{coro}\label{coro-canonical-form}
Every rational tangle other than $[\infty ]$ has a unique canonical form.
\end{coro}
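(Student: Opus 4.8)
The plan is to use Theorem~\ref{thm-RT_class} to convert this topological statement into a purely arithmetic one. Since the tangle fraction is a complete isotopy invariant, and $F([\infty]) = \infty$ while $F$ sends every rational tangle not isotopic to $[\infty]$ to a finite element of $\Q$, a canonical form of a rational tangle $T \neq [\infty]$ is precisely a canonical continued fraction $[a_1,\ldots,a_n]$ of integer tangles whose value $F([a_1,\ldots,a_n]) = a_1 + 1/(a_2 + \cdots)$ equals $F(T)$. Thus the corollary is equivalent to the claim that every rational number admits exactly one canonical continued fraction expansion, and I would prove existence and uniqueness of the latter.

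For existence, given $q = F(T) \in \Q$ I would first dispose of $q = 0$ (the tangle $[0]$, which is already canonical with $n=1$) and reduce the case $q < 0$ to $q > 0$ via the negation identity $-[a_1,\ldots,a_n] = [-a_1,\ldots,-a_n]$ of Lemma~\ref{lem-fraction_form_props}(3): a canonical (odd-length) expansion of $-q$ with nonnegative leading term and positive later terms negates entrywise into a canonical expansion of $q$ with nonpositive leading term and negative later terms. For $q > 0$, the Euclidean algorithm produces a finite expansion $[c_1,\ldots,c_k]$ with $c_1 = \lfloor q\rfloor \geq 0$ and $c_i \geq 1$ for $i \geq 2$. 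If $k$ is odd this is already canonical; if $k$ is even I would change its length by one using the terminal identities $[\ldots,c_k] = [\ldots,c_k - 1,1]$ (when $c_k \geq 2$) or $[\ldots,c_{k-1},1] = [\ldots,c_{k-1}+1]$ (when $c_k = 1$), each of which preserves the sign constraints, obtaining an odd-length expansion of canonical shape.

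For uniqueness I would invoke the classical fact that a positive rational number has exactly two finite continued fraction expansions with $c_1 \in \Z$ and all later partial quotients $\geq 1$, and that their lengths differ by exactly one (the two being related by the terminal move above). Consequently exactly one of the two has odd length, so the canonical expansion of $q > 0$ is unique; the negation bijection transports this uniqueness to $q < 0$, and $q = 0$ is immediate. I would prove the classical fact by induction on length: peeling off $c_1 = \lfloor q\rfloor$ is forced in every case except the ambiguous integer situation $[c_1,1] = [c_1 + 1]$, after which the induction hypothesis pins down the remaining tail.

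The main obstacle is the uniqueness step, specifically the careful handling of the terminal ambiguity $[\ldots,a] = [\ldots,a-1,1]$: this is the one place where the continued fraction of a given value genuinely fails to be unique, and the entire purpose of the parity condition (``$n$ odd'') in the definition of canonical form is to break exactly this ambiguity. I would therefore make sure the induction accounts for the boundary case in which the innermost partial quotient equals $1$ (so that $1/[c_k]$ fails to lie in the open interval $(0,1)$ and the naive ``$c_1 = \lfloor q\rfloor$'' peeling breaks down); by contrast, the sign bookkeeping relating the $q>0$ and $q<0$ cases is routine.
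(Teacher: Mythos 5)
Your proposal is correct and follows essentially the same route as the paper: reduce to an arithmetic statement via Theorem~\ref{thm-RT_class}, produce an expansion by the Euclidean algorithm, and fix the parity with the terminal move $[\ldots,a_n]=[\ldots,a_n-1,1]$. The only difference is one of detail --- the paper dismisses uniqueness as ``by construction'' where you correctly identify that it rests on the classical fact that a rational number has exactly two admissible expansions, of lengths differing by one, so that exactly one has odd length.
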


Given a rational tangle, one can close the ends in several ways to obtain a \emph{rational link}. The \emph{numerator closure} $N(\cdot)$ is obtained by horizontally joining together the top and bottom pairs. The Hopf link, for example, can be expressed as $N([2])$. The tangles exhibited so far have been unoriented, but of course this need not be the case. For notational purposes in Section~\ref{sec-KhRts}, we note that oriented rational tangles that have oriented numerator closures are isotopic to one of two forms, illustrated below. We refer to these as \emph{type I} and \emph{type II} (oriented) rational tangles.

\[
	\begin{tikzpicture}[auto,semithick,decoration={markings,mark=at position 0.5 with {\arrow{>}}}]

\begin{scope}[xshift=-3cm,scale=0.7,very thick]
\draw (0,0) circle (0.7cm);
\node (Tl) at (0,0) {\large $R$};	
\draw[postaction={decorate}] (135:0.7)++ (135:0.8) -- ++ (135:-0.8);
\draw[postaction={decorate}] (45:0.7) -- ++ (45:0.8);
\draw[postaction={decorate}] (-135:0.7) -- ++ (-135:0.8);
\draw[postaction={decorate}] (-45:0.7) ++ (-45:.8) -- ++ (-45:-.8);
\node (l) at (0,-2) {$I$};
\end{scope}

\begin{scope}[xshift=0cm,scale=0.7,very thick]
\draw (0,0) circle (0.7cm);
\node (Tl) at (0,0) {\large $R$};	
\draw[postaction={decorate}] (135:0.7)++ (135:0.8) -- ++ (135:-0.8);
\draw[postaction={decorate}] (45:0.7) -- ++ (45:0.8);
\draw[postaction={decorate}] (-135:0.7) ++ (-135:0.8) -- ++ (-135:-0.8);
\draw[postaction={decorate}] (-45:0.7) -- ++ (-45:.8);
\node (l) at (0,-2) {$II$};
\end{scope}
\end{tikzpicture}
\]

\section{A recap of local Khovanov homology}\label{sec-Kh}
We will assume the reader is familiar with Bar-Natan's definition of the Khovanov complex for tangles \cite{TangleAndCobords}, as well as his local `dotted' theory with its machinery for fast computations \cite{FastKh}. Nonetheless, we quickly recall the essentials below to fix notation.

\begin{defin}
	The category $\Cob_0(\partial T)$ is defined to have as objects formally graded smoothings (simple curves in the plane) with boundary $\partial T$. The Hom-sets $\Cob_0(\partial T)(\Oo \rightarrow \Oo')$ between two smoothings in $\Cob_0(\partial T)$ consist of formal linear combinations of all oriented two-dimensional surfaces embedded into a cylinder with boundary $(\Oo \times \{ 0 \}) \cup (\partial T \times [0,1]) \cup (\Oo' \times \{ 1 \})$, considered up to isotopy. Composition is defined by placing one cobordism on top of the other and vertically renormalizing the result, illustrated below.
	\[
		\begin{tikzpicture}[yscale=0.5,xscale=0.7,thick]

\begin{scope}[xshift=3.5cm]
\draw[dashed] (-1,0) arc (180:0:1cm and 0.6cm);
\draw (-1,0) arc (-180:0:1cm and 0.6cm);
\draw (0,3) ellipse (1cm and 0.6cm);
\draw (-1,0) -- (-1,3);
\draw (1,0) -- (1,3);
\begin{scope}[yshift=3cm]
\draw [out=-50,in=80] (130:1cm and 0.6cm) ++(0,-3) -- ++(0,3) to (-110:1cm and 0.6cm) -- ++(0,-3);
\end{scope}
\draw (0.7,0) [out=-90,in=40,looseness=1] to
(-110:1cm and 0.6cm); -- ++ (0,3) [out=70,in=-90] to  ++(70:1cm and 0.6cm) [out=90,in=-50,looseness=0.6] to ++ (130:1cm and 0.6cm) -- ++(0,-3);
\draw [out=-20,in=90,dashed] (115:0.7cm and 0.4cm) to (0.7,0);
\draw (-1,0) arc(180:130:1cm and 0.6cm) -- (115:0.7cm and 0.4cm);
\draw (1,0) arc (0:45:1cm and 0.6cm);

\draw [out=90,in=-85] (0.7,0) to ++ (-1.02,2.75);
\end{scope}

\begin{scope}[xshift=-3.5cm]
\draw[dashed] (-1,0) arc (180:0:1cm and 0.6cm);
\draw (-1,0) arc (-180:0:1cm and 0.6cm);
\draw (0,3) ellipse (1cm and 0.6cm);
\draw (-1,0) -- (-1,3);
\draw (1,0) -- (1,3);
\draw (-110:1cm and 0.6cm) [out=70,in=-90] to  ++(70:1cm and 0.6cm); \draw [out=90,in=-50,looseness=0.6,dashed] (0,0) to ++ (130:1cm and 0.6cm);
\draw (130:1cm and 0.6cm) -- ++(0,3);
\draw (-1,0) arc(180:130:1cm and 0.6cm) [out=-50] to ++(-33:0.35);
\draw (70:1cm and 0.6cm) arc (70:90:1cm and 0.6cm) ;

\draw (1,0) arc (0:45:1cm and 0.6cm);
\draw[dashed] (0.3,0) arc (180:0:0.2cm and 0.1cm);
\draw (0.3,0) arc (-180:0:0.2cm and 0.1cm);
\draw [out=90,in=90,looseness=10] (0.3,0) to (0.7,0);

\draw [out=90,in=-85] (0,0) to ++ (-0.32,2.75);

\begin{scope}[yshift=3cm]
\draw [out=-50,in=80] (130:1cm and 0.6cm) to (-110:1cm and 0.6cm) -- ++(0,-3);
\end{scope}
\end{scope}

\begin{scope}[xshift=0cm]
\draw[dashed] (-1,0) arc (180:0:1cm and 0.6cm);
\draw (-1,0) arc (-180:0:1cm and 0.6cm);
\draw (0,3) ellipse (1cm and 0.6cm);
\draw (-1,0) -- (-1,3);
\draw (1,0) -- (1,3);
\draw (0.7,3) -- (0.7,0) [out=-90,in=40,looseness=1] to
(-110:1cm and 0.6cm) -- ++ (0,3) [out=70,in=-90] to  ++(70:1cm and 0.6cm) [out=90,in=-50,looseness=0.6] to ++ (130:1cm and 0.6cm) -- ++(0,-3);
\draw [out=-20,in=90,dashed] (115:0.7cm and 0.4cm) to (0.7,0);
\draw (-1,0) arc(180:130:1cm and 0.6cm) -- (115:0.7cm and 0.4cm);
\draw (1,0) arc (0:45:1cm and 0.6cm);
\draw (0.5,3) ellipse (0.2cm and 0.1cm);
\draw (0,3) [out=-85,in=-95,looseness=15] to (0.3,3);
\end{scope}

\node (c) at (-1.75,1.5) {$\circ$};
\node (e) at (1.75,1.5) {$=$};

\end{tikzpicture}
	\]
	The category $\Mat(\Cob_0(\partial T))$ is simply the category of formal direct sums of objects in $\Cob_0(\partial T))$, with composition of morphisms given by matrix multiplication.
\end{defin}

The bigraded Khovanov complex for tangles is created by following the general recipe: given a tangle $T$ with $n$ crossings, once creates a $n$-dimensional cube with smoothings as vertices and cobordisms as edges. (To save rainforests, we won't replicate the diagrams illustrating this cube --- which are in our view the easiest way to understand the construction -- \cite{TangleAndCobords} contains several.) The cube is then flattened to produce a complex in $\Mat(\Cob_0(\partial T))$. This Khovanov complex $\Kb{T}$ is not yet a tangle invariant, but can be turned into one by possibly extending the Hom-sets of $\Cob_0$, and quotienting these out with one of several alternative sets of relations. (Each of which each give different flavours of Khovanov homology.)

In this note we use the `dotted theory', so extend $\Cob_0$ by allowing the cobordisms to contain a finite number of dots, then quotient out the Hom-sets using the relations below. We refer to this quotiented category as $\Cobbl$.
\[
	\begin{tikzpicture}

\begin{scope}[xshift=-6cm]
\draw (0,1) circle (0.5cm);
\node (S') at (1,1) {$= 0$};
\draw[dashed] (0.5,1) arc(0:180:0.5cm and 0.2cm);
\draw (-0.5,1) arc(180:360:0.5cm and 0.2cm);
\end{scope}

\begin{scope}[xshift=-3cm]
\draw (0,1) circle (0.5cm);
\node (S') at (1,1) {$= 1$};
\draw[dashed] (0.5,1) arc(0:180:0.5cm and 0.2cm);
\draw (-0.5,1) arc(180:360:0.5cm and 0.2cm);
\node (sb) at (0,1) {$\bullet$};
\end{scope}

\begin{scope}[shift={(-5,-0.5)}]
\draw (-0.5,-0.5) -- ++ (1,1) -- ++ (1.5,0) -- ++ (-1,-1) -- cycle;
\node (pds) at (0.75,0) {$\bullet \quad \bullet$};
\node (p) at (2.2,0) {$= 0$};
\end{scope}

\begin{scope}[shift={(0,-0.5)}]
\begin{scope}[xshift=0cm]
\draw[dashed] (0.3,0.5) arc(0:180:0.3cm and 0.15cm);
\draw (0.3,0.5) arc(0:-180:0.3cm and 0.15cm);
\draw (0,1.5) ellipse (0.3cm and 0.15cm);
\draw [out=-100,in=100] (0.3,1.5) to (0.3,0.5);
\draw [out=-80,in=80,looseness=0.9] (-0.3,1.5) to (-0.3,0.5);
\end{scope}
\begin{scope}[xshift=2cm]
\draw[dashed] (0.3,0.5) arc(0:180:0.3cm and 0.15cm);
\draw (0.3,0.5) arc(0:-180:0.3cm and 0.15cm);
\draw (0,1.5) ellipse (0.3cm and 0.15cm);
\draw [out=-90,in=-90,looseness=2.1] (-0.3,1.5) to (0.3,1.5);
\draw [out=90,in=90,looseness=1.7] (-0.3,0.5) to (0.3,0.5);
\node (bt) at (0,1.23) {$\bullet$};
\end{scope}

\begin{scope}[xshift=3.5cm]
\draw[dashed] (0.3,0.5) arc(0:180:0.3cm and 0.15cm);
\draw (0.3,0.5) arc(0:-180:0.3cm and 0.15cm);
\draw (0,1.5) ellipse (0.3cm and 0.15cm);
\draw [out=-90,in=-90,looseness=2] (-0.3,1.5) to (0.3,1.5);
\draw [out=90,in=90,looseness=1.7] (-0.3,0.5) to (0.3,0.5);
\node (bt) at (0,0.5) {$\bullet$};
\end{scope}
\node (e) at (1,1) {$=$};
\node (p) at (2.75,1) {$+$};
\end{scope}

\end{tikzpicture}
\]
We call the last relation \emph{neck cutting}.

This `dotted theory' is more refined than the default theory in \cite{TangleAndCobords} (in that the corbordism quotienting relations there also hold in the dotted one), but now we can consider homotopy equivalences involving dotted cobordisms. This allows us to simplify $\Kb{T}$ in ways not previously possible. Our results depend on two such isomorphisms, the first of which is described in Lemma~\ref{lem-deloop} and the other in Proposition~\ref{square-iso-prop}. Both Lemma~\ref{lem-deloop} and Propostion~\ref{lem-GE} are due to Bar-Natan \cite{FastKh}.

\begin{lemma}\label{lem-deloop}
If an object $S$ in $\Cobbl$ contains a closed loop $l$, it is isomorphic in $\Mat(\Cobbl)$ to the direct sum of two copies $S'\{-1\}$ and $S'\{+1\}$ of $S$ in which $l$ is removed, one with a degree shift of $-1$ and the other with a degree shift of $+1$. Symbolically, $\circleprop \cong \emptyset \{-1\} \oplus \emptyset \{+1\}$.
\end{lemma}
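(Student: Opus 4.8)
The plan is to exhibit an explicit pair of mutually inverse, degree-$0$ morphisms in $\Mat(\Cobbl)$ between the circle $\circleprop$ and the two-term object $\emptyset\{-1\}\oplus\emptyset\{+1\}$, and then to check that the two composites are identities using only the local relations defining $\Cobbl$ (the relation $S$, the dotted-sphere and two-dot relations, and neck cutting). Since every relation in $\Cobbl$ is local and the delooping only modifies the component $l$, it suffices to treat the case $S=\circleprop$: the general statement follows by taking the identity cobordism (vertical curtains) on all the other components of $S$, since these are untouched by the argument.

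First I would write down the four candidate cobordisms. For $\phi\colon\circleprop\to\emptyset\{-1\}\oplus\emptyset\{+1\}$, I take its component $\circleprop\to\emptyset\{-1\}$ to be the undotted capping disk and its component $\circleprop\to\emptyset\{+1\}$ to be the once-dotted capping disk. For $\psi\colon\emptyset\{-1\}\oplus\emptyset\{+1\}\to\circleprop$, I take its component $\emptyset\{-1\}\to\circleprop$ to be the dotted cupping disk and its component $\emptyset\{+1\}\to\circleprop$ to be the undotted cupping disk. Using $\deg C=\chi(C)-\tfrac12|B|$ with $|B|=0$, and recalling that each dot lowers degree by $2$, an undotted disk has degree $+1$ and a dotted disk has degree $-1$. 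Combining this with the grading-shift bookkeeping (a cobordism of degree $d$ from $\Oo_1\{m_1\}$ to $\Oo_2\{m_2\}$ has degree $d+m_2-m_1$), one checks that each of the four components is a morphism of degree $0$; hence $\phi$ and $\psi$ are homogeneous of degree $0$, as required for an isomorphism in the graded category.

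Next I would compute $\phi\circ\psi$ on $\emptyset\{-1\}\oplus\emptyset\{+1\}$, whose four entries are closed surfaces. The two diagonal entries are each a once-dotted sphere, equal to $1$ by the dotted-sphere relation, giving the two identities; the entry $\emptyset\{+1\}\to\emptyset\{-1\}$ is an undotted sphere, which is $0$ by the relation $S$, and the entry $\emptyset\{-1\}\to\emptyset\{+1\}$ is a twice-dotted sphere, which is $0$ by the two-dot relation. Thus $\phi\circ\psi$ is the identity.

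The crux of the argument — and the only step that is not a one-line sphere evaluation — is the other composite $\psi\circ\phi\colon\circleprop\to\circleprop$. Expanding it as a sum over the two summands, $\psi\circ\phi$ equals (undotted cap stacked below dotted cup) plus (dotted cap stacked below undotted cup); that is, the cylinder cut through its neck into two disks, summed over the two choices of which disk carries the dot. This is exactly the right-hand side of the neck-cutting relation applied to the identity cylinder $\circleprop\times[0,1]$, so $\psi\circ\phi=\mathrm{id}_{\circleprop}$. Recognising neck cutting at this point is the key observation; I expect it to be the only genuinely substantive step, with everything else reducing to the sphere evaluations above, after which the lemma is proved.
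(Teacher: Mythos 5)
Your proposal is correct and follows essentially the same route as the paper: the same four cap/cup cobordisms (undotted into $\emptyset\{-1\}$, dotted into $\emptyset\{+1\}$, and the reverse pattern for the cups), with $\psi\circ\phi=\mathrm{id}$ by neck cutting and $\phi\circ\psi=\mathrm{id}$ by evaluating the dotted-sphere entries. The degree-$0$ verification and the reduction to the single-loop case are welcome additions that the paper leaves implicit.
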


\begin{proof}
The isomorphisms are as follows.
\[
	\begin{tikzpicture}[auto,semithick]
\node (l) at (-3,0) {\tikz \draw[very thick] (0,0) circle (0.5cm);};
\node (r) at (3,0) {\tikz \draw[very thick] (0,0) circle (0.5cm);};
\node (0a) at (0,0.8) {\large $\emptyset$ \footnotesize $(-1)$};
\node (0b) at (0,-0.8) {\large $\emptyset$ \footnotesize $(+1)$};
\node (0) at (0,0) {$\oplus$};

\draw[->] (l) to node[anchor=south] {\begin{tikzpicture}\begin{scope}[rotate around={90:(0,0)},-] 
\draw (0,0) ellipse (0.3cm and 0.15cm);
\draw [out=-90,in=-90,looseness=2] (-0.3,0) to (0.3,0);
\end{scope}\end{tikzpicture}} (0a);

\draw[->] (l) to node[anchor=north] {\begin{tikzpicture}\begin{scope}[rotate around={(90:(0,0))},-] 
\draw (0,0) ellipse (0.3cm and 0.15cm);
\draw [out=-90,in=-90,looseness=2.1] (-0.3,0) to (0.3,0);
\node (bt) at (.2,-0.27) {$\bullet$}; 
\end{scope}\end{tikzpicture}} (0b);

\draw[->] (0a) to node[anchor=south] {\begin{tikzpicture}\begin{scope}[rotate around={90:(0,0)},-] 
\draw[dashed] (0.3,0) arc(0:180:0.3cm and 0.15cm);
\draw (0.3,0) arc(0:-180:0.3cm and 0.15cm);
\draw [out=90,in=90,looseness=1.7] (-0.3,0) to (0.3,0);
\node (bt) at (-0.25,0) {$\bullet$};\end{scope}\end{tikzpicture}} (r);

\draw[->] (0b) to node[anchor=north] {\begin{tikzpicture}\begin{scope}[rotate around={90:(0,0)},-] 
\draw[dashed] (0.3,0) arc(0:180:0.3cm and 0.15cm);
\draw (0.3,0) arc(0:-180:0.3cm and 0.15cm);
\draw [out=90,in=90,looseness=1.7] (-0.3,0) to (0.3,0);
\end{scope}\end{tikzpicture}} (r);
\end{tikzpicture}
\]
\end{proof}

Although this `delooping' process actually increases the number of subobjects in the complex, in practice many of the maps between the subobjects become isomorphisms. In such a case, the categorical version of Gaussian elimination below allows us to discard pairs of subobjects related via an isomorphism. Like the previously lemma, we use Gaussian elimination extensively in what follows.

\begin{lemma}\label{lem-GE}
If $\phi:b_1 \rightarrow b_2$ is an isomorphism in some additive category $\Cc$, then the four term complex in $\Mat(\Cc)$
\[
	\begin{tikzpicture}[auto,semithick]
	\node (0) at (-2,0) {$\cdots$};
	\node (1) at (0,0) {$\begin{bmatrix} C \end{bmatrix}$};
	\node (2) at (3,0) {$\begin{bmatrix} b_1 \\ D \end{bmatrix}$};
	\node (3) at (6,0) {$\begin{bmatrix} b_2 \\ E \end{bmatrix}$};
	\node (4) at (9,0) {$\begin{bmatrix} F \end{bmatrix}$};
	\node (5) at (11,0) {$\cdots$};
	\draw[->] (0) to (1);
	\draw[->] (1) to node {$\begin{pmatrix} \alpha \\ \beta	\end{pmatrix}$} (2);
	\draw[->] (2) to node {$\begin{pmatrix} \phi & \delta \\ \gamma & \varepsilon \end{pmatrix}$} (3);
	\draw[->] (3) to node {$\begin{pmatrix} \mu & \nu \end{pmatrix}$} (4);
	\draw[->] (4) to (5);
\end{tikzpicture}
\]
is homotopy equivalent to the complex
\[
	\begin{tikzpicture}[auto,semithick]
	\node (0) at (-1.5,0) {$\cdots$};
	\node (1) at (0,0) {$\begin{bmatrix} C \end{bmatrix}$};
	\node (2) at (3,0) {$\begin{bmatrix} D \end{bmatrix}$};
	\node (3) at (7.5,0) {$\begin{bmatrix} E \end{bmatrix}$};
	\node (4) at (10.5,0) {$\begin{bmatrix} F \end{bmatrix}$};
	\node (5) at (12,0) {$\cdots.$};
	\draw[->] (0) to (1);
	\draw[->] (1) to node {$ \begin{pmatrix} \beta \end{pmatrix} $} (2);
	\draw[->] (2) to node {$ \begin{pmatrix} \varepsilon - \gamma \phi\inv \delta \end{pmatrix} $} (3);
	\draw[->] (3) to node {$ \begin{pmatrix} \nu \end{pmatrix} $} (4);
	\draw[->] (4) to (5);
\end{tikzpicture}
\]
Here $C,D,F$ and $F$ are arbitrary columns of objects in $\Cc$ and all Greek letters (other than $\phi$) represent arbitrary matrices of morphisms in $\Cc$; all matrices appearing in these complexes are block-matrices with blocks as specified.
\end{lemma}

\section{Khovanov complexes of rational tangles}\label{sec-KhRts}
In Section~\ref{sec-RTs} we recalled that every positive or negative rational tangle is isotopic to a canonical form $\la a_1,a_2,\ldots,a_n \ra$ where the $a_i$ are all non-negative or all non-positive respectively (Corollary~\ref{coro-canonical-form}). Such a rational tangle can be constructed from a finite sequence of additions and products with the $[+1]$ or $[-1]$ tangle respectively. This together with the locality of the Khovanov bracket means that the Khovanov complex of a rational tangle can be constructed inductively from $\Kb{\pm 1}$ by a sequence of intermediate complexes, where each is obtained from the next by adding a crossing then immediately simplifying the resulting complex. The main result in this section explicitly shows how, at each step, the complexes simplify to zig-zag complexes.

For the rest of the note, we'll work with positive tangles. The results for the case of negative tangles are completely analogous. We use the notation $\Kb{a_1,a_2,\ldots,a_n}$ to denote the Khovanov complex of the rational tangle $\la a_1,\ldots,a_n\ra$.

We first calculate the Khovanov complex of the $n$-twist. Calculations like this one have been known for a long time (c.f. the $sl_3$ and $sl_n$ analogies in \cite{Scottsl3} and \cite{Danielsln}), but we spell out the details here since it is a crucial component of the later arguments.

\begin{prop}
\label{proof-ints}
Let $n>0$ and $[n]$ have orientation type II (Section~\ref{sec-RTs}). Then $\Kbp{n}$ is homotopy equivalent to
\[	
	\begin{tikzpicture}[auto,semithick, scale=1.2]
\node (1) at (-2,0) {\nodeinf};
\draw[thick] (-2.4,-0.4) -- (-1.6,-0.4);
\node (1a) at (-2,-0.65) {\footnotesize $(-3n+1)$};
\node (2) at (0,0) {$\cdots$};
\node (3) at (2.5,0) {\nodeinf};
\node (3a) at (2.5,-0.65) {\footnotesize $(-n-5)$};
\node (4) at (5,0) {\nodeinf};
\node (4a) at (5,-0.65) {\footnotesize $(-n-3)$};
\node (5) at (7.5,0) {\nodeinf};
\node (5a) at (7.5,-0.65) {\footnotesize $(-n-1)$};
\node (6) at (9.5,0) {\nodezero\;.};
\node (6a) at (9.5,-0.65) {\footnotesize $(-n)$};

\draw[->] (1) to node {} (2);
\draw[->] (2) to node {$\vcenter{\hbox{\nodeinfdota}} - \vcenter{\hbox{\nodeinfdotb}}$} (3);
\draw[->] (3) to node {$\vcenter{\hbox{\nodeinfdota}} + \vcenter{\hbox{\nodeinfdotb}}$} (4);
\draw[->] (4) to node {$\vcenter{\hbox{\nodeinfdota}} - \vcenter{\hbox{\nodeinfdotb}}$} (5);
\draw[->] (5) to node {\nodesaddleh} (6);

\end{tikzpicture}
\]
\end{prop}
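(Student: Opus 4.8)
The plan is to induct on $n$, realizing $[n+1]=[n]+[1]$ through the planar-algebra (tensor) structure of the Khovanov bracket and then collapsing the result with delooping and Gaussian elimination, in exactly the manner of the Reidemeister computations of Section~\ref{sec-26}. For the base case $n=1$, Definition~\ref{def-Kb} gives $\Kb{1}$ as the two-term complex whose objects are \nodeinfsm\ and \nodezerosm\ and whose only nonzero map is the saddle \nodesaddleh; the shift conventions of Definition~\ref{def-gradings}, applied to a single positive type-II crossing, place \nodeinfsm\ in quantum degree $-3n+1=-2$ and \nodezerosm\ in degree $-n=-1$. Since the claimed complex has no intermediate \nodeinfsm-terms when $n=1$, this is exactly the asserted form.

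For the inductive step, assume $\Kb{n}$ has the stated shape: a linear zig-zag with $n$ copies of \nodeinfsm\ followed by \nodezerosm. By Theorem~\ref{thm-plan-arg-morph} we may compute $\Kb{n+1}=D(\Kb{n},\Kb{1})$, where $D$ is the arc diagram for addition, so that $\Kb{n+1}$ is the total complex of the double row obtained by tensoring the zig-zag with $\Kb{1}$. Resolving the new crossing and placing its smoothings to the right of each object produces exactly the grid of Figure~\ref{fig-int-tang-sum}: the top row (new crossing smoothed as $\infty$) consists of the objects \nodeclownhsm, each an $\infty$-smoothing carrying a disjoint closed circle, for the first $n$ entries, capped by \nodemhzeroinfsm; the bottom row (new crossing smoothed as $0$) consists of the loop-free objects \nodemhinfzerosm, capped by \nodemhzerozerosm. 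The vertical maps between the two rows are the saddle coming from $\Kb{1}$.

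I would then deloop the $n$ circles in the top row (Lemma~\ref{lem-deloop}), splitting each \nodeclownhsm\ into a direct sum of grade-shifted copies \nodeinfsm$\{-1\}$ and \nodeinfsm$\{+1\}$. As in the R1 computation of Section~\ref{sec-26}, composing each vertical saddle with the delooping cups and isotoping the resulting cactus arm back into the curtain turns one of the two summand-maps into the identity cobordism onto the loop-free bottom object and the other into a dotted cobordism. These $n$ identity maps are isomorphisms, so $n$ applications of Lemma~\ref{lem-GE} each cancel one delooped summand against its bottom-row partner, erasing the entire bottom row except its terminal object \nodemhzerozerosm, which is \nodezerosm, and leaving precisely $n+1$ copies of \nodeinfsm\ (the surviving delooped summands together with \nodemhzeroinfsm, itself isotopic to \nodeinfsm) alongside the surviving \nodezerosm\ --- the correct object count for $\Kb{n+1}$.

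The delicate part, and the main obstacle, is the bookkeeping of the surviving morphisms and gradings. Each Gaussian elimination replaces the map between the objects flanking a cancelled pair by the corrected composite $\varepsilon-\gamma\phi\inv\delta$ of Lemma~\ref{lem-GE}, and I would evaluate these using neck-cutting and the dot relations of $\Cobbl$ to show that the surviving horizontal maps are alternately $\vcenter{\hbox{\nodeinfdota}}-\vcenter{\hbox{\nodeinfdotb}}$ and $\vcenter{\hbox{\nodeinfdota}}+\vcenter{\hbox{\nodeinfdotb}}$, that the procedure produces exactly one additional \nodeinfsm-term and reproduces the alternating pattern anchored at the terminal saddle (so the map immediately preceding the saddle is again $\vcenter{\hbox{\nodeinfdota}}-\vcenter{\hbox{\nodeinfdotb}}$), and that the terminal map is the pure saddle \nodesaddleh; any stray signs can be normalized using Lemma~\ref{lem-negate-map}. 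Finally I would accumulate the shifts: the $\{\pm 1\}$ from delooping together with the homological and quantum shift forced by the extra positive crossing (Definition~\ref{def-gradings}) send $-3n+1\mapsto -3(n+1)+1$ and extend the two-step ladder of quantum degrees down to the new values $-(n+1)-1$ and $-(n+1)$. Verifying that the correction terms assemble into exactly these alternating dotted maps, rather than merely into maps of the correct shape, is the computational heart of the argument.
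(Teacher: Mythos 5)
Your proposal is correct and follows essentially the same route as the paper: induction via $\Kb{n+1}=D(\Kb{n},\Kb{1})$ using the addition planar arc diagram, delooping the closed circles in the row where the new crossing is $\infty$-smoothed, and cancelling the loop-free row by Gaussian elimination, with the surviving maps alternating between $\vcenter{\hbox{\nodeinfdota}}-\vcenter{\hbox{\nodeinfdotb}}$ and $\vcenter{\hbox{\nodeinfdota}}+\vcenter{\hbox{\nodeinfdotb}}$. The only differences are organizational: the paper sweeps left to right and treats the final saddle square as a separate $\Kb{2}$-type computation, and it carries out explicitly the ``conjugation'' of the horizontal maps under the correction term $\varepsilon-\gamma\phi\inv\delta$, which you correctly identify as the computational heart but leave as a plan.
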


\begin{proof}

The case $n=1$ follows directly from the definition of the Khovanov bracket. The case $n=2$ is similar to the proof of the invariance of the Khovanov bracket under R2. Namely, we write $[2] = [1] + [1]$ and construct the planar arc diagram $D$ corresponding to tangle addition (Figure~\ref{kh-2}). Since the Khovanov bracket is a planar algebra morphism, $\Kb{2} = \Kb{D([1],[1])} = D(\Kb{1},\Kb{1})$. The complex $D(\Kb{1},\Kb{1})$ is constructed in Figure~\ref{kh-2} below.
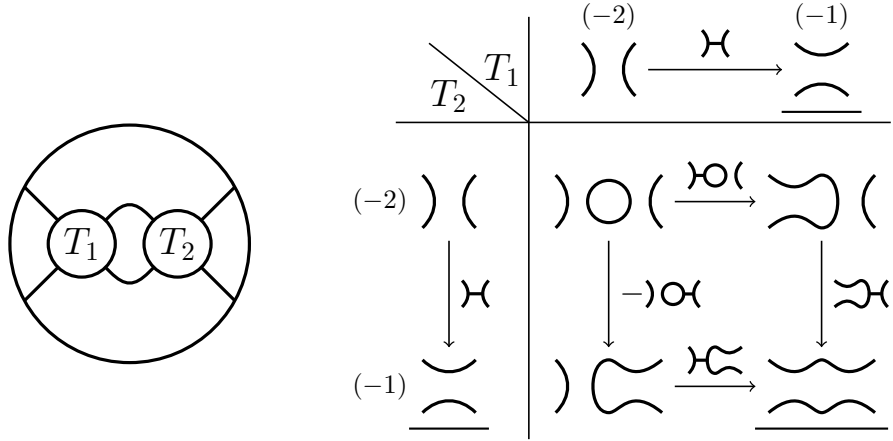
\begin{figure}[ht!]
	\centering
	\begin{tikzpicture}[auto,semithick,scale=.7]

\begin{scope}[very thick, scale=0.9,yshift=3cm]
\draw (0,0) circle (2.5cm);
\draw (-1,0) circle (.7cm);
\draw (1,0) circle (.7cm);
\node (T1) at (-1,0) {\large $T_1$};
\node (T2) at (1,0) {\large $T_2$};

\draw[xshift=-1cm] (45:.7) [xshift=0.1cm] -- +(0,.1) [xshift=0.9cm] -- (-0.4,0.6) .. controls (-0.1,.9) and (0.1,0.9) .. (0.4,0.6) [xshift=1cm] -- (135:0.7);
\draw[xshift=-1cm] (-45:.7) [xshift=0.1cm] -- +(0,-.1) [xshift=0.9cm] -- (-0.4,-0.6) .. controls (-0.1,-.9) and (.1,-.9) .. (0.4,-.6) [xshift=1cm] -- (-135:.7);
\draw[xshift=-1cm] (135:0.7) --  ++(135:1);
\draw[xshift=-1cm] (-135:0.7) --  ++(-135:1);
\draw[xshift=1cm] (45:0.7) --  ++(45:1);
\draw[xshift=1cm] (-45:0.7) --  ++(-45:1);
\end{scope}

\begin{scope}[xshift=9cm]
\draw (-1.5,-1) -- (-1.5,7);
\draw (-4,5) -- (5.3,5);
\draw (-1.5,5) -- ++(-1.875,1.5);

\node (t1l) at (-2,6) {\large $T_1$};
\node (t2l) at (-3,5.5) {\large $T_2$};

\draw[thick] (3.25,5.2) -- (4.75,5.2);
\draw[thick] (-3.75,-0.8) -- (-2.25,-0.8);
\draw[thick] (2.75,-0.8) -- (5.25,-0.8);

\node (00) at (0,0) {\nodemhinfzero};
\node (01) at (0,3.5) {\nodeclownh};
\node (10) at (4,0) {\nodemhzerozero};
\node (11) at (4,3.5) {\nodemhzeroinf};

\node (u0) at (0,6) {\nodeinf};
\node (u1) at (4,6) {\nodezero};
\node (u0a) at (0,7) {\footnotesize $(-2)$};
\node (u0a) at (4,7) {\footnotesize $(-1)$};

\node (l1) at (-3,3.5) {\nodeinf};
\node (l0) at (-3,0) {\nodezero};
\node (l0a) at (-4.3,3.5) {\footnotesize $(-2)$};
\node (l1a) at (-4.3,0) {\footnotesize $(-1)$};

\draw[->] (01) to node {\nodeclownsadl} (11);
\draw[->] (01) to node {$-\vcenter{\hbox{\nodeclownsadr}}$} (00);
\draw[->] (00) to node {\nodeinfzerosad} (10);
\draw[->] (11) to node {\nodezeroinfsad} (10);

\draw[->] (u0) to node {\nodesaddleh} (u1);
\draw[->] (l1) to node {\nodesaddleh} (l0);
\end{scope}

\end{tikzpicture}
	\caption{\textsc{left} The integer tangle $[2]$ is the result of placing the $[1]$ tangle in both holes of the planar arc diagram illustrated. \textsc{right} Calculating $\Kb{2}$ from $\Kb{1}$ and the planar arc diagram to the left.}
	\label{kh-2}
\end{figure}
We simplify the complex as follows. Delooping the object in the NW corner gives us
\[
	\begin{tikzpicture}[auto,semithick,scale=1.2]
\node (01) at (0,2) {\nodeclownhdeloop};
\node (0c) at (0,1) {$\oplus$};
\node (01a) at (-1.1,2) {\footnotesize $(-5)$};
\node (00) at (0,0) {\nodeclownhdeloop};
\node (00a) at (-1.1,0) {\footnotesize $(-3)$};
\node (10) at (4,0) {\nodemhinfzero};
\node (11) at (4,2) {\nodemhzeroinf};
\node (3) at (6.5,1) {\nodemhzerozero \;.};

\draw[->] (01) to node[pos=0.35] {\nodeinfdotb} (11);
\draw[->] (00) to node[pos=0.35] {$-1$} (10);
\draw[->] (01) to node[pos=0.25,anchor=south] {$-\vcenter{\hbox{\nodeinfdota}}$} (10);
\draw[->] (00) to node[pos=0.15,anchor=south] {$1$} (11);
\draw[->] (10) to node[pos=0.7] {\nodeinfzerosad} (3);
\draw[->] (11) to node[pos=0.3] {\nodezeroinfsad} (3);

\draw[thick] (5.8,0.55) -- (7.05,0.55);

\end{tikzpicture}
\]
After applying Gaussian elimination and negating the first map, this simplifies to
\[
	\begin{tikzpicture}[auto,semithick, scale=1.2]
\node (1) at (-0.5,0) {\nodeinf};
\node (1a) at (-0.5,-0.65) {\footnotesize $(-5)$};
\node (2) at (2,0) {\nodeinf};
\node (2a) at (2,-0.65) {\footnotesize $(-3)$};
\node (3) at (4,0) {\nodezero\;.};
\draw[thick] (3.5,-0.4) -- (4.3,-0.4);
\node (3a) at (4,-0.65) {\footnotesize $(-2)$};

\draw[->] (1) to node {$\vcenter{\hbox{\nodeinfdota}} - \vcenter{\hbox{\nodeinfdotb}}$} (2);
\draw[->] (2) to node {\nodesaddleh} (3);
\end{tikzpicture}
\]
Now assume the claim is true for some $n\geq 2$. By using the same `addition' planar arc diagram in Figure~\ref{kh-2} above with $T_1 = [n]$ and $T_2 = [1]$, we have $\Kb{n+1} = D(\Kb{n},\Kb{1})$. This complex is in Figure~\ref{nplus1complex}, and simplifies as follows.
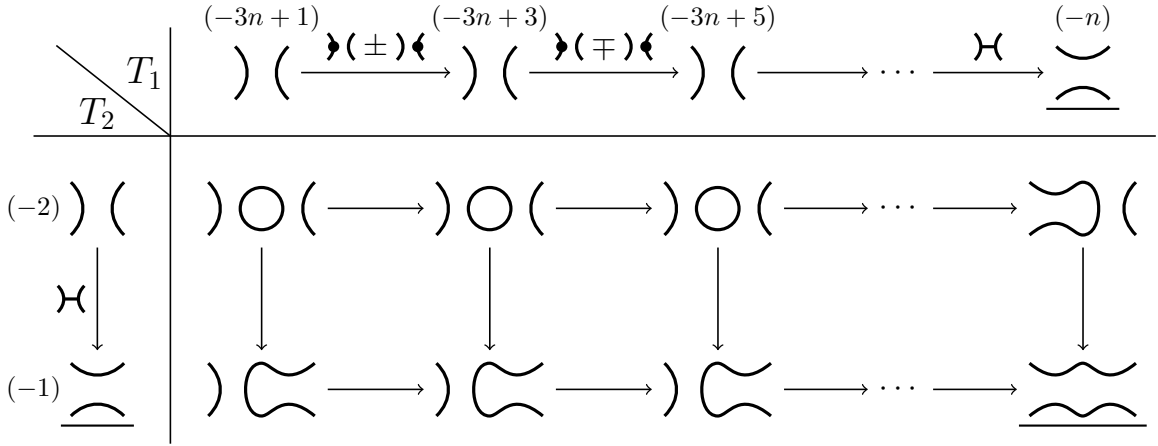
\begin{figure}[ht!]
	\centering
	\begin{tikzpicture}[auto,semithick,scale=1.2]
\draw (-1,-0.6) -- (-1,4);
\draw (-2.5,2.8) -- (9.8,2.8);
\draw (-1,2.8) -- ++ (-1.25,1);
\draw[thick] (-2.2,-0.4) -- (-1.4,-0.4);
\draw[thick] (8.6,3.1) -- (9.4,3.1);
\draw[thick] (8.3,-0.4) -- (9.7,-0.4);

\node (t1l) at (-1.28,3.5) {\large $T_1$};
\node (t2l) at (-1.8,3.05) {\large $T_2$};

\node (1) at (0,2) {\nodeclownh};
\node (2) at (2.5,2) {\nodeclownh};
\node (3) at (5,2) {\nodeclownh};
\node (4) at (7,2) {$\cdots$};
\node (5) at (9,2) {\nodemhzeroinf};

\node (6) at (0,0) {\nodemhinfzero};
\node (7) at (2.5,0) {\nodemhinfzero};
\node (8) at (5,0) {\nodemhinfzero};
\node (9) at (7,0) {$\cdots$};
\node (10) at (9,0) {\nodemhzerozero};

\node (u1) at (0,3.5) {\nodeinf};
\node (u1a) at (0,4.1) {\footnotesize $(-3n + 1)$};
\node (u2) at (2.5,3.5) {\nodeinf};
\node (u2a) at (2.5,4.1) {\footnotesize $(-3n + 3)$};
\node (u3) at (5,3.5) {\nodeinf};
\node (u3a) at (5,4.1) {\footnotesize $(-3n + 5)$};
\node (u4) at (7,3.5) {$\cdots$};
\node (u5) at (9,3.5) {\nodezero};
\node (u5a) at (9,4.1) {\footnotesize $(-n)$};

\node (u6) at (-1.8,2) {\nodeinf};
\node (u6a) at (-2.5,2) {\footnotesize $(-2)$};
\node (v6) at (-1.8,0) {\nodezero};
\node (u6a) at (-2.5,0) {\footnotesize $(-1)$};

\draw[->] (1) to node {} (2);
\draw[->] (2) to node {} (3);
\draw[->] (3) to node {} (4);
\draw[->] (4) to node {} (5);

\draw[->] (1) to node {} (6);
\draw[->] (2) to node {} (7);
\draw[->] (3) to node {} (8);
\draw[->] (5) to node {} (10);

\draw[->] (6) to node {} (7);
\draw[->] (7) to node {} (8);
\draw[->] (8) to node {} (9);
\draw[->] (9) to node {} (10);

\draw[->] (u1) to node {$\vcenter{\hbox{\nodeinfdota}} \pm \vcenter{\hbox{\nodeinfdotb}}$} (u2);
\draw[->] (u2) to node {$\vcenter{\hbox{\nodeinfdota}} \mp \vcenter{\hbox{\nodeinfdotb}}$} (u3);
\draw[->] (u3) to node {} (u4);
\draw[->] (u4) to node {\nodesaddleh} (u5);
\draw[->] (u6) to node[swap] {\nodesaddleh} (v6);
\end{tikzpicture}
	\caption{The complex $\Kb{n+1}$ can be computed from $\Kb{n}$ and $\Kb{1}$ using the same planar arc diagram in Figure~\ref{kh-2}. For readability the morphisms and quantum gradings of the subobjects have been omitted.}
	\label{nplus1complex}
\end{figure}

Delooping the NW corner of the complex and then applying Gaussian elimination gives us the following complex.
\[
	\begin{tikzpicture}[auto,scale=1.2]
\node (1) at (-1.5,2) {\nodeinf};
\node (1a) at (-1.5,1.4) {\footnotesize $(-3n - 2)$};
\node (2) at (3,2) {\nodeclownh};
\node (3) at (5.5,2) {$\cdots$};
\node (4) at (3,0) {\nodemhinfzero};
\node (5) at (5.5,0) {$\cdots$};

\draw[->] (1) to node {$\vcenter{\hbox{\nodeclownhspab}} - \vcenter{\hbox{\nodeclownhspac}} \mp \vcenter{\hbox{\nodeclownhspbc}}$} (2);
\draw[->] (2) to node {$\pm\vcenter{\hbox{\nodeclownsadr}}$} (4);
\draw[->] (2) to node {} (3);
\draw[->] (4) to node {} (5);

\end{tikzpicture}
\]
The morphism out of the subobject in the NW corner may appear complicated, but simplifies when the next term is delooped.

\[
	\begin{tikzpicture}[auto,semithick,scale=1.2]
\node (1) at (-0.5,3) {\nodeinf};
\node (1a) at (-0.5,2.4) {\footnotesize $(-3n-2)$};
\node (2s) at (3,3) {$\oplus$};
\node (2a) at (2.5,3.5) {\nodeinf};
\node (2b) at (3.5,2.5) {\nodeinf};
\node (3) at (6,3) {$\cdots$};
\node (4) at (3,0) {\nodemhinfzero};
\node (5) at (6,0) {$\cdots$};

\draw[->] (1) to node[above=0.05cm] {$\vcenter{\hbox{\nodeinfdota}} \mp \vcenter{\hbox{\nodeinfdotb}}$} (2a);
\draw[->] (1) to node[swap] {$-\vcenter{\hbox{\nodeinfdotab}}$} (2b);
\draw[->] (2a) to node {} (3);
\draw[->] (2b) to node {} (3);
\draw[->] (2a) to node[swap] {$\pm \vcenter{\hbox{\nodeinfdotb}} $} (4);
\draw[->] (2b) to node {$\pm 1$} (4);
\draw[->] (4) to node {} (5);
\end{tikzpicture}
\]

A further application of Gaussian elimination removes two more subobjects.

We continue to move left to right through the complex using this method -- delooping each successive tangle in the north row and eliminating the subobject south of it. The process `conjugates' the maps between the northern objects. By this, we mean the maps $D(\vcenter{\hbox{\nodeinfdota}} \pm \vcenter{\hbox{\nodeinfdotb}}, 1)$ simplify to $\vcenter{\hbox{\nodeinfdota}} \mp \vcenter{\hbox{\nodeinfdotb}}$.

After working our way to the right end of the chain complex, eventually only two subobjects on the southern row remain. These subobjects, together with those directly north of them, form a square consisting only of saddle maps. This square is the complex associated to $\Kb{2}$. The rest of the complex, consisting of the terms we have delooped is attached to this square like a tail. The square simplifies in exactly the same way as we did previously for the $[2]$ tangle in the $n=2$ case, from which the proposition follows.
\end{proof}

Rational tangles in general do not have complexes as elementary as these. However, it is true that any non-zero map between two indecomposable subobjects in the minimal complex of a rational tangle is (up to sign) one of the six morphisms below.

\[
	\begin{tikzpicture}[scale=1.2]
\node (1) at (0,1) {$a = \vcenter{\hbox{\begin{tikzpicture}[scale=0.5]\tanginfdota\end{tikzpicture}}} + \vcenter{\hbox{\begin{tikzpicture}[scale=0.5]\tanginfdotb\end{tikzpicture}}}$};
\node (2) at (3,1) {$c = \vcenter{\hbox{\begin{tikzpicture}[scale=0.5]\tangzerodota\end{tikzpicture}}} + \vcenter{\hbox{\begin{tikzpicture}[scale=0.5]\tangzerodotb\end{tikzpicture}}}$};
\node (3) at (6,0.5) {$s = \vcenter{\hbox{\begin{tikzpicture}[scale=0.5]\saddleh\end{tikzpicture}}}, \;\; \vcenter{\hbox{\begin{tikzpicture}[scale=0.5]\saddlev\end{tikzpicture}}}$};
\node (4) at (0,0) {$b = \vcenter{\hbox{\begin{tikzpicture}[scale=0.5]\tanginfdota\end{tikzpicture}}} - \vcenter{\hbox{\begin{tikzpicture}[scale=0.5]\tanginfdotb\end{tikzpicture}}}$};
\node (5) at (3,0) {$d = \vcenter{\hbox{\begin{tikzpicture}[scale=0.5]\tangzerodota\end{tikzpicture}}} - \vcenter{\hbox{\begin{tikzpicture}[scale=0.5]\tangzerodotb\end{tikzpicture}}}$};
\end{tikzpicture}
\]

This property is a consequence of Theorem~\ref{thm-main}, and we will often use these abbreviations in the sequel to minimize clutter when depicting complexes.

We will also not worry about the signs of morphisms between subobjects in general. Negating a differential in a chain complex gives an isomorphic complex, and we are only considering complexes up to homotopy. It should be noted that we cannot just negate any map between subobjects though: this would in general not even give a complex. The fact that the squares in our complex anti-commute means we still need to keep track of the relative sign of adjacent maps though.

We now come to a type of isomorphism we call a `square isomorphism'. If there is any piece of machinery to take away from the article, this is it, since the entire article is based on it.

\begin{prop}\label{square-iso-prop}
	The vertical maps in Figure~\ref{fig-square_iso} constitute an isomorphism.
\end{prop}
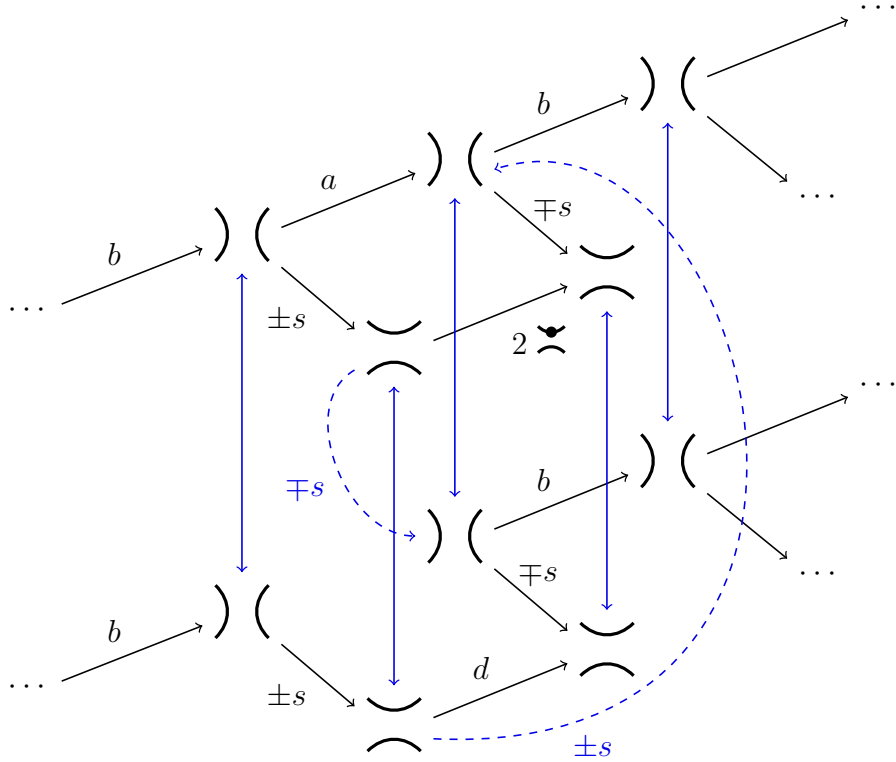
\begin{figure}[ht]
	\centering
	\begin{tikzpicture}[auto,semithick,scale=1]
\node (bld) at (-2.8,-1) {$\cdots$};
\node (000) at (0,0) {\nodeinf};
\node (001) at (2,-1.5) {\nodezero};
\node (010) at (2.8,1) {\nodeinf};
\node (011) at (4.8,-0.5) {\nodezero};
\node (brd) at (5.6,2) {\nodeinf};
\node (brdu) at (8.4,3) {$\cdots$};
\node (brdd) at (7.6,0.5) {$\cdots$};

\draw[->] (bld.10) to node {$b$} (000.200);
\draw[->] (000.-40) to node[swap] {$\pm s$} (001.155);
\draw[->] (010.-40) to node[pos=0.6,above=0.15cm] {$\mp s$} (011.155);
\draw[->] (001.10) to node {$d$} (011.200);
\draw[->] (010.10) to node {$b$} (brd.200);
\draw[->] (brd.10) to node {} (brdu.200);
\draw[->] (brd.-40) to node {} (brdd.155);

\begin{scope}[yshift=5cm]
\node (tld) at (-2.8,-1) {$\cdots$};
\node (100) at (0,0) {\nodeinf};
\node (101) at (2,-1.5) {\nodezero};
\node (110) at (2.8,1) {\nodeinf};
\node (111) at (4.8,-0.5) {\nodezero};
\node (trd) at (5.6,2) {\nodeinf};
\node (trdu) at (8.4,3) {$\cdots$};
\node (trdd) at (7.6,0.5) {$\cdots$};

\draw[->] (tld.10) to node {$b$} (100.200);
\draw[->] (100.-40) to node[swap] {$\pm s$} (101.155);
\draw[->] (100.10) to node {$a$} (110.200);
\draw[->] (110.-40) to node[pos=0.8,above=0.15cm] {$\mp s$} (111.155);
\draw[->] (101.10) to node[swap] {$2\;\nodezerodota$} (111.200);
\draw[->] (110.10) to node {$b$} (trd.200);
\draw[->] (trd.10) to node {} (trdu.200);
\draw[->] (trd.-40) to node {} (trdd.155);
\end{scope}

\draw[blue!90!black,<->] (000) to node {} (100);
\draw[blue!90!black,<->] (001) to node {} (101);
\draw[blue!90!black,<->] (010) to node {} (110);
\draw[blue!90!black,<->] (011) to node {} (111);
\draw[blue!90!black,<->] (trd) to node {} (brd);

\draw[dashed,blue!90!black,->] (101) to [out=210,in=180] node[swap] {$\mp s$} (010);
\draw[pos=0.1,dashed,blue!90!black,->] (001.-20) .. controls (9,-2) and (6.7,7) .. node[swap] {$\pm s$} (110.-15);
\end{tikzpicture}
	\caption{A square isomorphism. Horizontal layers are the isomorphic complexes, the vertical maps (colored blue) the actual isomorphism. Straight arrows between layers are just identities, the curved arrows more interesting.}
	\label{fig-square_iso}
\end{figure}
\begin{proof}
To actually check this is an isomorphism, we need to verify three points.
\begin{enumerate}
	\item The top and bottom layers are actually chain complexes.
	\item The collection of maps constitutes a chain map.
	\item The chain maps are inverses of one another.
\end{enumerate}

The first is easy -- as we see in an example shortly, the top layer forms part of an actual Khovanov complex, so we only need to check that the squares in the bottom layer anticommute. Only one of the squares in this is different from the top layer, so we need only check that $d \circ (\pm s) = -(\mp s) \circ 0 = 0$. This is true since the saddle in $d \circ \pm s$ connects both sheets in each of the cobordisms constituting $d$; the components then cancel.

The second point amounts to showing that several compositions commute (Figure~\ref{fig-square_iso-ugly}). This involves a few calculations, but is not difficult. (Hint: apply neck-cutting.)

The third point is the easiest of them all as we merely need to check that
\[
	\begin{pmatrix}
	\begin{array}{cc}
		1 & \mp s \\
		0 & 1 \\
	\end{array}
	\end{pmatrix}^{-1} = \begin{pmatrix}
					\begin{array}{cc}
						1 & \pm s \\
						0 & 1\\
					\end{array}
					\end{pmatrix}.
\]
\end{proof}

\begin{figure}[ht]
	\centering
	\begin{tikzpicture}[semithick,auto,scale=1]
\node (1) at (-1.5,6) {\nodeinf};
\node (2u) at (2,6.8) {\nodeinf};
\node (2) at (2,6) {$\oplus$};
\node (2d) at (2,5.2) {\nodezero};
\node (3) at (6,6) {$\oplus$};
\node (3u) at (6,6.8) {\nodeinf};
\node (3d) at (6,5.2) {\nodezero};
\begin{scope}[yshift=-5cm]
\node (4) at (-1.5,6) {\nodeinf};
\node (5) at (2,6) {$\oplus$};
\node (5u) at (2,6.8) {\nodeinf};
\node (5d) at (2,5.2) {\nodezero};
\node (6) at (6,6) {$\oplus$};
\node (6u) at (6,6.8) {\nodeinf};
\node (6d) at (6,5.2) {\nodezero};
\end{scope}

\draw[->] (-1,6) to node {$\begin{pmatrix} a \\ \pm s \end{pmatrix}$} (1.5,6);
\draw[->] (2.5,6) to node {$\begin{pmatrix} b & 0 \\ \mp s & 2\;\nodezerodota \end{pmatrix}$} (5.5,6);
\draw[->] (-1,1) to node {$\begin{pmatrix} 0 \\ \pm s \end{pmatrix}$} (1.5,1);
\draw[->] (5) to node {$\begin{pmatrix} b & 0 \\ \mp s & d \end{pmatrix}$} (5.5,1);

\draw[blue!90!black,<->] (1) to node {$1$} (4);
\draw[blue!90!black,->] (2d.290) to node {$\begin{pmatrix} 1 & \mp s \\ 0 & 1 \end{pmatrix}$} (5u.70);
\draw[blue!90!black,<-] (2d.250) to node[swap] {$\begin{pmatrix} 1 & \pm s \\ 0 & 1 \end{pmatrix}$} (5u.110);
\draw[blue!90!black,<->] (3d) to node {$1$} (6u);
\end{tikzpicture}
	\caption{A square isomorphism, in a more traditional form.}
	\label{fig-square_iso-ugly}
\end{figure}

We will illustrate why this `square isomorphism' is useful with an example.

\begin{example}\label{eg-5-1}
Let us calculate $\Kb{5,1}$ without worrying about the quantum gradings or the precise homological degrees (but we will of course keep track of the relative homological degrees). With $[5]$ as $T_1$ and $[1]$ as $T_2$ in the planar arc diagram corresponding to the product of two tangles (analogous to that in Figure~\ref{kh-2}), we obtain the following complex.

\[
	\begin{tikzpicture}[auto,semithick,scale=1.2]
\node (1) at (0,2) {\nodeinf};
\node (2) at (2,2) {\nodeinf};
\node (3) at (4,2) {\nodeinf};
\node (4) at (6,2) {\nodeinf};
\node (5) at (8,2) {\nodeinf};
\node (6) at (10,2) {\nodezero};
\node (7) at (0,0) {\nodezero};
\node (8) at (2,0) {\nodezero};
\node (9) at (4,0) {\nodezero};
\node (10) at (6,0) {\nodezero};
\node (11) at (8,0) {\nodezero};
\node (12) at (10,0) {\nodeclownv};

\draw[->] (1) to node {$a$} (2);
\draw[->] (2) to node {$b$} (3);
\draw[->] (3) to node {$a$} (4);
\draw[->] (4) to node {$b$} (5);
\draw[->] (5) to node {$s$} (6);

\draw[->] (1) to node {$-s$} (7);
\draw[->] (2) to node {$s$} (8);
\draw[->] (3) to node {$-s$} (9);
\draw[->] (4) to node {$s$} (10);
\draw[->] (5) to node {$-s$} (11);
\draw[->] (6) to node {$s$} (12);

\draw[->] (7) to node {$2\;\vcenter{\hbox{\nodezerodota}}$} (8);
\draw[->] (9) to node {$2\; \vcenter{\hbox{\nodezerodota}}$} (10);
\draw[->] (11) to node {$s$} (12);

\end{tikzpicture}
\]

The right-most anticommuting square of the complex, consisting only of saddles, simplifies in the same way as in the calculation of $\Kb{2}$ in the proof of Proposition~\ref{proof-ints}. After using the square isomorphism on the left-two anti-commuting squares, and some isomorphisms to remove minus signs, we obtain the following complex.
\[
	\begin{tikzpicture}[auto,semithick,scale=1.2]
\node (1) at (0,2) {\nodeinf};
\node (2) at (2,2) {\nodeinf};
\node (3) at (4,2) {\nodeinf};
\node (4) at (6,2) {\nodeinf};
\node (5) at (8,2) {\nodeinf};
\node (6) at (10,2) {\nodezero};
\node (7) at (0,0) {\nodezero};
\node (8) at (2,0) {\nodezero};
\node (9) at (4,0) {\nodezero};
\node (10) at (6,0) {\nodezero};
\node (12) at (10,0) {\nodezero};

\draw[->] (1) to node {$s$} (7);
\draw[->] (7) to node {$d$} (8);
\draw[->] (2) to node {$s$} (8);
\draw[->] (2) to node {$b$} (3);
\draw[->] (3) to node {$s$} (9);
\draw[->] (9) to node {$d$} (10);
\draw[->] (4) to node {$s$} (10);
\draw[->] (4) to node {$b$} (5);
\draw[->] (5) to node {$s$} (6);
\draw[->] (6) to node {$d$} (12);
\end{tikzpicture}
\]

So $\Kb{5,1}$ is actually homotopic to a zig-zag complex. Representing the complex this way, however, makes it hard to determine the geometry of underlying zig-zag, as well as being inefficient. We instead represent this as a `dot diagram', illustrated below and explained in the caption.

\begin{figure}[ht!]
	\centering
	\begin{tikzpicture}[very thick,scale=.7]
\foreach \x in {0,...,5}
{
	\draw[gray,xshift=0.5cm] (\x,-0.5) -- (\x,4.5);
}
\draw (0,0) -- (2,0.8) -- (1,1.2) -- (4,2.4) -- (3,2.8) -- (6,4);
\foreach \p in {(0,0),(1,1.2),(2,1.6),(3,2.8),(4,3.2)}
{\filldraw[fill=white,very thick] \p circle (4.5pt);}
\foreach \p in {(1,0.4),(2,0.8),(3,2),(4,2.4),(5,3.6),(6,4)}
{\filldraw \p circle (2pt);}
\end{tikzpicture}
\caption{The dot diagram of $\Kb{5,1}$. Vertical lines separate regions of the same homological height. $[\infty]$ tangles are represented by circles, $[0]$ tangles by dots. Black lines connecting the circles and dots indicate non-zero morphisms between subobjects.}
\end{figure}

We haven't labeled the morphisms, but there is no need to since all can be determined from the diagram --- each of the five `straight' segments in the dot diagram contain a saddle, which determines the rest of the morphisms along the segment. (Recall that, as remarked earlier, in the minimal complex of a rational tangle, there are (up to sign) only six morphisms. This and the fact that $\partial^2 = 0$ determines the other morphisms. For example, if a saddle map $\miniinf \rightarrow \minizero$ is followed by an unknown morphism $\minizero \rightarrow \minizero$, the unknown morphism must be ether $c$ or $d$, but only $d$ composes with $s$ to give 0. Similar statements apply to the other unknown compositions. This concludes the calculation.
\end{example}

For the remainder of the paper, we'll describe a zig-zag complex by its backbone of non-zero morphisms, writing these as a word. Staring at one $z$-end of the zig-zag complex, we follow the backbone of non-zero morphisms until we reach the other $z$-end, writing down the morphisms along the way. We call this word the \emph{morphism string} of a zig-zag complex. For example the morphism string of the minimal complex for $\Kb{5}$ is $ababs$.

Strictly speaking, the morphism string of a zig-zag complex isn't well defined -- it depends on which $z$-end one starts at -- but we'll adopt the convention that we begin at the $z$-end with the lowest homological height, if the two differ. (Our main result in this section doesn't depend on such a choice though.)

When constructing the morphism string, when we go backwards in homological height we put a dash on the morphism.

\begin{thm}\label{thm-main}
Let $T$ be a positive rational tangle. (The case of negative rational tangles is similar to what we present below.) Then the Khovanov complex $\Kbp{T}$ of $T$ has a representative that is a zig-zag complex, and the morphism string associated to this minimal complex is a word $w$ in $\{ a,b,c,d,s \}$, possibly with dashes on the letters satisfying the following condition. After removing the dashes from $w$, if $\tilde w = l_1l_2l_3$ is any subword of $w$ consisting of three adjacent letters:
\begin{itemize}
	\item if $l_2 = a$, then $\tilde w = bab$,
	\item if $l_2 = b$, then $l_1 \in \{a,s'\}, \; l_3 \in \{ a,s\}$,
	\item if $l_2 = c$, then $\tilde w = dcd$,
	\item if $l_2 = d$, then $l_1 \in \{ c,s\}, \; l_3 \in \{c,s'\}$,
	\item if $l_2 = s'$, then $\tilde w = ds'b$,
	\item if $l_2 = s$, then $\tilde w = bsd$.
\end{itemize}

\noindent Furthermore, the morphism string of the minimal complex of $\Kbp{T + [1]}$ or $\Kbp{T * [1]}$ can be obtained from the morphism string of the minimal complex of $\Kbp{T}$ by the following rules.
\\ \\
To obtain $\Kbp{T + [1]}$ from $\Kbp{T}$, split the morphism string of the minimal complex of $\Kbp{T}$ into a list of subwords $w_1,\ldots,w_n$ (so that their concatenation $w_1\cdots w_n$ is the morphism string) such that $w_i \in \{c,$ $c'$, $d$, $d'$, $s'\Box$, $s'\Box s$, $ \Box s \}$ where $\Box$ is a string in $\{ a,a',b,b'\}$. The morphism string of $\Kbp{T + [1]}$ is given by the concatenation $f(w_1)\cdots f(w_n)$ where $f$ is the following collection of rules.
\begin{itemize}
	\item If $\boxdot$ is the string obtained from $\Box$ by replacing each letter $a/a'/b/b'$ with $b/b'/a/a'$ respectively, then
	\begin{itemize}
		\item $f(s'\Box) = s'b'\boxdot$,
		\item $f(s'\Box s) = s'b'\boxdot bs$,
		\item $f(\Box s) = \boxdot bs$.
	\end{itemize}
	\item If $w_i$ = $d$, $d'$,
	\begin{itemize}
		\item $f(w_i) = sw_i$ \quad $(i = 1)$,
		\item $f(w_i) = w_i$ \quad $(1<i<n)$,
		\item $f(w_i) = w_is'$ \quad $(i = n)$.
	\end{itemize}
	\item If $w_i = c,c'$, $f(c) = s'bs$, $f(c') = s'b's$.
\end{itemize}

\noindent To obtain $\Kbp{T * [1]}$ from $\Kbp{T}$, split the morphism string of the minimal complex of $\Kbp{T}$ into a list of subwords $w_1,\ldots,w_n$ such that $w_i \in \{a,$ $a'$, $b$, $b'$, $s\Box$, $s\Box s'$, $ \Box s' \}$ where $\Box$ is a string in $\{ c,c',d,d'\}$. The morphism string of $\Kbp{T * [1]}$ is given by the concatenation $g(w_1)\cdots g(w_n)$ where $g$ is the following collection of rules.
\begin{itemize}
	\item If $\boxdot$ is the string obtained from $\Box$ by replacing each letter $c/c'/d/d'$ with $d/d'/c/c'$ respectively, then
	\begin{itemize}
		\item $g(s\Box) = sd\boxdot$,
		\item $g(s\Box s') = sd\boxdot d's'$,
		\item $g(\Box s') = \boxdot d's'$.
	\end{itemize}
	\item If $w_i$ = $b$, $b'$,
	\begin{itemize}
		\item $g(w_i) = s'w_i$ \quad $(i = 1)$,
		\item $g(w_i) = w_i$ \quad $(1<i<n)$,
		\item $g(w_i) = w_is$ \quad $(i = n)$.
	\end{itemize}
	\item If $w_i = a,a'$, $g(a) = sds'$, $g(a') = sd's'$.
\end{itemize}
\end{thm}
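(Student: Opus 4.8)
The plan is to argue by induction on the number of crossings of the positive rational tangle $T$, exploiting the fact (Corollary~\ref{coro-canonical-form} and the surrounding discussion) that every positive rational tangle in standard form is built from $[1]$ by a finite sequence of right-additions $(\,\cdot\,)+[1]$ and bottom-multiplications $(\,\cdot\,)*[1]$. The base case is $\Kbp{1}$, which by definition is the single saddle $\vcenter{\hbox{\nodeinf}} \xrightarrow{\,s\,} \vcenter{\hbox{\nodezero}}$; its morphism string is the one-letter word $s$, which is visibly a zig-zag (its two subobjects are the two $z$-ends) and vacuously satisfies the three-letter local conditions. For the inductive step it suffices to treat $+[1]$: the multiplication rules $g$ are obtained from the addition rules $f$ by rotating the whole picture through $\pi/2$, which interchanges $[\infty] \leftrightarrow [0]$, hence $a \leftrightarrow c$, $b \leftrightarrow d$ and $s \leftrightarrow r$, exactly as in Proposition~\ref{prop-flipping} and Lemma~\ref{lem-rewrite_frac}; under this symmetry $f$ becomes $g$ letter-for-letter, so I would prove the $f$ half in detail and deduce $g$ formally.

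So fix the zig-zag representative of $\Kbp{T}$ with morphism string $w$, assumed to satisfy all the stated conditions, and compute $\Kbp{T+[1]} = D(\Kbp{T},\Kbp{1})$ using the fact that $\Kbp{\cdot}$ is a planar algebra morphism (Theorem~\ref{thm-plan-arg-morph}) together with the addition planar arc diagram $D$. By the tensor formulas \eqref{def-tensor-chaindefs}--\eqref{def-tensor-morphdefs} this doubles the complex: each subobject $\Omega$ of $\Kbp{T}$ produces a ``northern'' copy $D(\Omega,\vcenter{\hbox{\nodeinf}})$ and a ``southern'' copy $D(\Omega,\vcenter{\hbox{\nodezero}})$, joined by a vertical saddle, while the original differentials of $\Kbp{T}$ run horizontally along each row. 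The heart of the proof is to simplify this doubled complex using only delooping (Lemma~\ref{lem-deloop}), Gaussian elimination (Lemma~\ref{lem-GE}) and the square isomorphism (Proposition~\ref{prop-square_iso}), precisely as in Proposition~\ref{proof-ints} and Examples~\ref{eg-5-1} and~\ref{exam-btwist}. I would carry this out block by block according to the decomposition of $w$ into the subwords $w_i \in \{c,c',d,d',r\Box,r\Box s,\Box s\}$: each southern object carrying a closed loop is delooped, and the identity components thereby created are cancelled by Gaussian elimination, which conjugates the $a/b$ maps along a northern block into $b/a$ (the mechanism already isolated in Figure~\ref{fig-int-tang-sum}); the square isomorphism is then applied at each $s$--$d$ junction to straighten the residual square of saddles into a one-dimensional segment, converting a $2\,\vcenter{\hbox{\nodezerodota}}$ map into a $d$ map. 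Reading the surviving subobjects and morphisms left-to-right yields exactly the word $f(w_1)\cdots f(w_n)$, which is the asserted rule; signs are immaterial by Lemma~\ref{lem-negate-map}.

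It then remains to check the two static assertions: that the split of $w$ into the $w_i$ exists and is essentially unique, and that $f(w)$ again lies in the alphabet $\{a,b,c,d,r,s\}$ and satisfies the three-letter conditions. The split is forced by the local conditions themselves: since every $s$ occurs only in $bsd$ and every $r$ only in $drb$, each maximal $\{a,a',b,b'\}$-block carries a leading $r$ (attached to the $d$ on its left), a trailing $s$ (attached to the $d$ on its right), or both, while the $c/c'/d/d'$ letters sit between blocks as isolated subwords -- which is exactly the allowed list for the $w_i$. Preservation of the conditions under $f$ is a finite combinatorial verification: one checks the interior of each $f(w_i)$ (for instance $f(c)=rbs$ creates the legal triples $drb$, $rbs$, $bsd$ once placed between its neighbours) and each junction $f(w_i)f(w_{i+1})$ against the six bulleted conditions. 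Doing this for all adjacent pairs of subword-types closes the induction and simultaneously proves both the structural claim and the local conditions.

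The main obstacle I anticipate is the bookkeeping in the simplification of the doubled complex: keeping straight which southern objects deloop, which of the resulting maps are genuine isomorphisms eligible for Gaussian elimination, and -- most delicately -- pinpointing the exact location at which the square isomorphism must be invoked rather than plain delooping, and confirming that its output is the abbreviation $d$ and not $c$ or a bare saddle. The interaction at the boundaries between consecutive subwords $w_i$, where a trailing $s$ of one block meets the leading $r$ or an isolated $d$ of the next, is where the case analysis is densest and where an error would most easily hide; this is the step I would write out most carefully, reducing every boundary to one of a small number of canonical local pictures and simplifying each exactly once.
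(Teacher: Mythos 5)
Your proposal is correct and follows essentially the same route as the paper: build the doubled complex via the planar algebra composition with $\Kbp{1}$, then simplify block-by-block using delooping, Gaussian elimination (as in Proposition~\ref{proof-ints}) and the square isomorphism (Proposition~\ref{prop-square_iso}), reading off the rules from the surviving subobjects. The only cosmetic differences are that you prove the $+[1]$ case and deduce $*[1]$ by the $\pi/2$ rotation symmetry (the paper does the reverse and simply calls the other case ``analogous''), and that you spell out the verification that the subword decomposition exists and that the local three-letter conditions are preserved, which the paper leaves implicit.
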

Before embarking on the proof, we take a moment to reassure ourselves that the statement is believable.

As a sanity-check we wrote a simple program in Python that calculates $\Kb{T}$ for positive rational tangles $T$ based on our rules. We then extended the program to calculate $Kh(R)$ for rational $R$. We checked the results for the first 25 rational knots (admittedly, by hand --- otherwise it would not have been simple); they were all correct.

\begin{example}\label{eg-82}
We can also run Theorem~\ref{thm-main} by hand, computing $Kh(8_2)$, which would be very intimidating to do by hand with previous technology.

The knot $8_2$ is rational, obtained via numerator closure (Section~\ref{sec-RTs}) from $\la 5,1,2 \ra$. Computing $\Kb{8_2}$ is then just a matter of computing $\Kb{5,1,2}$ before placing the minimal complex in the numerator closure planar arc diagram. Let us compute the structure of the chain complex first; gradings and homological heights will be calculated at the end using a later result.

Morphism strings of Khovanov complexes of rational tangles are not particularly readable, in that the structure of the complex is not as apparent as when it is represented as a dot diagram. As such we'll replace any instance of $s$ with a negative-sloping diagonal line, and an $s'$ with a positive sloping diagonal line.

The morphism string corresponding to $\Kb{1}$ is just $s$. By applying the rules above, we build the morphism string of $\Kb{5,1,2}$ from $s$ as follows.

\[
	\begin{tikzpicture}[auto,scale=1]
\begin{scope}[yshift=3cm]
	\draw (0,1) -- (1,0);
	\node (1) at (0.5,-0.5) {$\Kb{1}$};
	\begin{scope}[xshift=3cm]
	\node (11) at (0,1) {$b$};
	\node (12) at (1.5,0) {};
	\draw[-] (11.east) to (12.west);
	\node (13) at (0.75,-0.5) {$\Kb{2}$};
	\end{scope}
	\begin{scope}[xshift=6cm]
	\node (21) at (0,1) {$ab$};
	\node (22) at (1.6,0) {};
	\draw[-] (21.east) to (22.west);
	\node (23) at (0.8,-0.5) {$\Kb{3}$};
	\end{scope}
	\begin{scope}[xshift=9cm]
	\node (31) at (0,1) {$bab$};
	\node (32) at (1.7,0) {};
	\draw[-] (31.east) to (32.west);
	\node (33) at (0.85,-0.5) {$\Kb{4}$};
	\end{scope}
	\begin{scope}[xshift=12cm]
	\node (41) at (0,1) {$abab$};
	\node (42) at (1.8,0) {};
	\draw[-] (41.east) to (42.west);
	\node (43) at (0.9,-0.5) {$\Kb{5}$};
	\end{scope}
	\begin{scope}[dashed]
	\draw[->] (1) to node[anchor=north] {$+[1]$} (13);
	\draw[->] (13) to node[anchor=north] {$+[1]$} (23);
	\draw[->] (23) to node[anchor=north] {$+[1]$} (33);
	\draw[->] (33) to node[anchor=north] {$+[1]$} (43);
	\end{scope}
\end{scope}

\begin{scope}[xshift=6cm,scale=0.7]
	\node (1) at (0,.8) {};
	\node (2) at (1.5,0) {$d$};
	\node (3) at (3,.8) {$b$};
	\node (4) at (4.5,0) {$d$};
	\node (5) at (6,.8) {$b$};
	\node (6) at (7.5,0) {$d$};
	\draw[-] (1.east) to (2.west);
	\draw[-] (2.east) to (3.west);
	\draw[-] (3.east) to (4.west);
	\draw[-] (4.east) to (5.west);
	\draw[-] (5.east) to (6.west);
	\node (10) at (9.9,0.5) {$\Kb{5,1}$};
	\end{scope}
\draw[->,dashed] (43) to node[anchor=west] {$*[1]$} (10);
	\begin{scope}[shift={(4.8,-2)},scale=0.7]
	\node (11) at (0,.8) {$b$};
	\node (12) at (1.5,0) {$d$};
	\node (13) at (3.3,.8) {$b'ab$};
	\node (14) at (5.1,0) {$d$};
	\node (15) at (6.9,.8) {$b'ab$};
	\node (16) at (8.7,0) {$d$};
	\node (17) at (10,.8) {};
	\draw[-] (11.east) to (12.west);
	\draw[-] (12.east) to (13.west);
	\draw[-] (13.east) to (14.west);
	\draw[-] (14.east) to (15.west);
	\draw[-] (15.east) to (16.west);
	\draw[-] (16.east) to (17.west);
	\node (23) at (11.6,0.5) {$\Kb{5,1,1}$};	
	\end{scope}
\draw[->,dashed] (10) to node[anchor=west] {$+[1]$} (23);
\begin{scope}[scale=0.7,shift={(4,-6)}]
	\node (1) at (-0.2,1) {$ab$};
	\node (2) at (1.5,0) {$d$};
	\node (3) at (3.7,1) {$b'a'bab$};
	\node (4) at (5.9,0) {$d$};
	\node (5) at (8.3,1) {$b'a'bab$};
	\node (6) at (10.6,0) {$d$};
	\node (7) at (12.2,1) {$b'$};
	\draw[-] (1.east) to (2.west);
	\draw[-] (2.east) to (3.west);
	\draw[-] (3.east) to (4.west);
	\draw[-] (4.east) to (5.west);
	\draw[-] (5.east) to (6.west);
	\draw[-] (6.east) to (7.west);
	\node (20) at (14.5,0.5) {$\Kb{5,1,2}$};
\end{scope}
\draw[->,dashed] (23) to node[anchor=west] {$+[1]$} (20);
\end{tikzpicture}
\]
All that's left to do is to take the numerator closure of this complex and determine the homological and grading information. The former is easier to do if we construct the dot diagram of the complex. This and its closure are in Figure~\ref{fig-82-dot-diag} below.

\begin{figure}[ht!]
	\centering
	\begin{tikzpicture}[very thick,scale=.7]
\begin{scope}
\foreach \x in {0,...,7}
{
	\draw[gray,xshift=0.5cm] (\x,-0.5) -- (\x,6);
}
\draw[yscale=0.25] (0,0) -- ++(4,4) -- ++ (-3,3) -- ++ (5,5) -- ++(-3,3) -- 	++(5,5) -- ++(-2,2);
\foreach \p in {(0,0),(1,0.25),(2,0.5),(3,1.25),(2,1.5),(1,1.75),(2,2),(3,2.25),(4,2.5),(5,3.25),(4,3.5),(3,3.75),(4,4),(5,4.25),(6,4.5),(7,5.25),(6,5.5)}
{\filldraw[fill=white,very thick] \p circle (4.5pt);}
\foreach \p in {(3,0.75),(4,1),(5,2.75),(6,3),(7,4.75),(8,5)}
{\filldraw \p circle (2pt);}
\end{scope}

\begin{scope}[xshift=11cm]
\foreach \x in {0,...,7}
{
	\draw[gray,xshift=0.5cm] (\x,-0.5) -- (\x,6);
}
\draw[yscale=0.25] (0,0) -- (1,1) (2,2) -- (4,4) -- (3,5) (2,6) -- (1,7) (2,8) -- (3,9) (4,10) -- (6,12) -- (5,13) (4,14) -- (3,15) (4,16) -- (5,17) (6,18) -- (8,20) -- (7,21);
\foreach \p in {(0,0),(1,0.25),(2,0.5),(3,1.25),(2,1.5),(1,1.75),(2,2),(3,2.25),(4,2.5),(5,3.25),(4,3.5),(3,3.75),(4,4),(5,4.25),(6,4.5),(7,5.25),(6,5.5)}
{\filldraw[fill=white,very thick] \p circle (4.5pt);}
\foreach \p in {(3,0.75),(4,1),(5,2.75),(6,3),(7,4.75),(8,5)}
{\filldraw \p circle (2pt);}
\end{scope}
\end{tikzpicture}
	\caption{The dot diagram of $\Kb{5,1,2}$ and $N(\Kb{5,1,2})$. This example is illustrative of the Khovanov homology of rational knots in general.}
	\label{fig-82-dot-diag}
\end{figure}
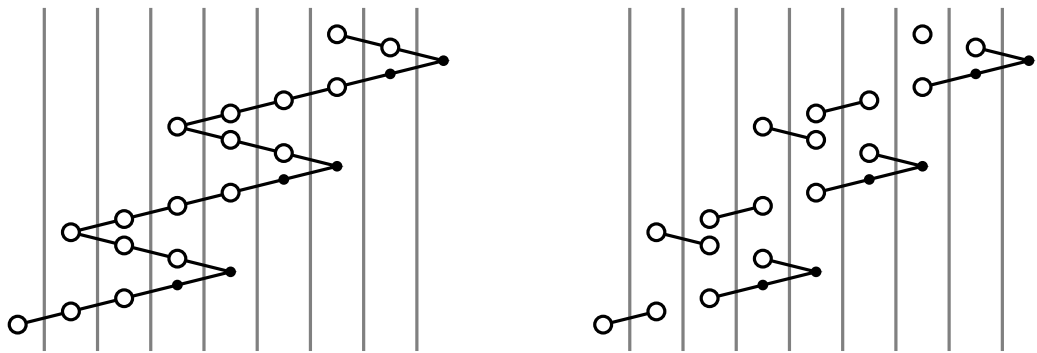

The dot diagram illustrating the closure of the complex has three types of components. The \zdashz\ components have the following form. (The maps at the beginning and end are zero.)

\[
	\begin{tikzpicture}[auto,semithick,scale=1.2]
\node (-1) at (-1,0) {$\bullet$};
\node (0) at (1,0) {\tikz \draw[very thick] (0,0) circle (0.35cm);};
\node (0b) at (1,-0.65) {\footnotesize $(a)$};
\node (1) at (3,0) {\tikz \draw[very thick] (0,0) circle (0.35cm);};
\node (1b) at (3,-0.65) {\footnotesize $(a+2)$};
\node (2) at (5,0) {$\bullet$};
\draw[->] (-1) to (0);
\draw[->] (0) to node {$2\;\nodedottedcircle$} (1);
\draw[->] (1) to (2);
\end{tikzpicture}
\]
The homology groups of these complexes are trivial to compute. After delooping and applying the standard TQFT functor, taking homology gives
\[
	\Kh^n(\zdashz) = \Z_{(a-1)}, \qquad \Kh^{n+1}(\zdashz) = \Z_{(a+3)} \oplus \Z/2\Z_{(a+1)}.
\]

The wedge-shaped complexes in the dot diagram with four subobjects are homotopy equivalent to \zdashz\ components in one higher homological height.\footnote{This follows from a square isomorphism, that the Khovanov bracket is invariant under R1, and that planar algebra morphisms send homotopic equivalent complexes to homotopic equivalent complexes.} No prizes for guessing what complex the lone circle in the diagram is.

Therefore $\Kb{8_2}$ splits into a direct sum of nine complexes, and we have computed the homology of these complexes. To finish, we simply need to calculate the homological heights and gradings. Using a later result, by the matrix multiplication in Example~\ref{bigrading-example}, we find that the subobject with the lowest bigrading is a $[\infty]$ tangle with quantum grading $-16$ and homological height $-6$.

For all intents and purposes we're done, although it's common to write the result as the Khovanov polynomial instead of the actual Khovanov homology. The Khovanov polynomial of a link is a two-variable Laurent polynomial in $\Z[q^{\pm 1},t^{\pm 1}]$ defined by
\begin{equation}
	\sum_j \qdim (Kh^j(L)) \cdot t^j.
\end{equation}

It's common to denote the Khovanov polynomial in the form of a table. The positive integer in $i$-th $j$-th square of the table corresponds to the coefficient of $t^j q^i$ in the polynomial.

Filling the table in using the information obtained above is a trivial matter; \zdashz\ components in the dot diagram correspond to knights moves while $\circleprop$ corresponds to the exceptional pair. If we compare the table to that for $8_2$ on the Knot Atlas, \cite{KA} we see that the two agree.

\begin{table}[ht]
	\centering
	\begin{tabular}{|c|c|c|c|c|c|c|c|c|c|}
	\hline
	$i \setminus j$ & $-6$&$-5$&$-4$&$-3$&$-2$&$-1$&$0$&$1$&$2$\\
	\hline
	$1$  & & & & & & & & &1\\
	\hline
	$-1$ & & & & & & & & & \\
	\hline
	$-3$ & & & & & & &2&1& \\
	\hline
	$-5$ & & & & & &1&1& & \\
	\hline
	$-7$ & & & & &2&1& & & \\
	\hline
	$-9$ & & & &1&1& & & & \\
	\hline
	$-11$& & &1&2& & & & & \\
	\hline
	$-13$& &1&1& & & & & & \\
	\hline
	$-15$& &1& & & & & & & \\
	\hline
	$-17$&1& & & & & & & & \\
	\hline
	\end{tabular}
	\caption{The Khovanov homology of $Kh(8_2)$, by hand!}
	\end{table}
\end{example}
We now return to the main result of this section.
\\ \\
\emph{Proof of Theorem~\ref{thm-main}:}
The first claim of the theorem regarding the structure of $\Kb{T}$ for rational $T$ follows from induction from the subword rules of the theorem. Namely, since every rational tangle is given by a finite sequence of additions and products of $[1]$, one simply shows that the rules preserve the property of the first claim.

The proof of the subword rules essentially follows from square isomorphisms and the proof of the minimal complex for integer tangles. The proof describing how the morphism string of $\Kb{T + [1]}$ is obtained from $\Kb{T}$ is analogous to that for $\Kb{T * [1]}$ case; as such we prove the former.

To save space, we don't replicate the complexes used in the proof of Theorem~\ref{thm-mat-actions}, but since that proof and this are essentially two sides of the same coin, we refer the reader to them.

Let us first explain the general structure of $\Kb{T + [1]}$. If $\Kb{T}$ is a zig-zag complex, then we can write the complex as a horizontal string of $[0]$ or $[\infty]$ tangles with forwards or backwards arrows between the subobjects. If the morphism string of $\Kb{T}$ contains $n$ letters, can view $\Kb{T + [1]}$ as a `$2 \times (n + 1)$ complex'. That is, by constructing the complex using the addition planar arc diagram, we have $\Kb{T + [1]} = \Kb{D([T],[1])} = D(\Kb{T},\Kb{1})$. This is a complex consisting of two rows, each with $n + 1$ subobjects, and various left, right, and down arrows between adjacent subobjects in the complex. We now simplify the complex.

Assume the morphism string of $\Kb{T}$ satisfies the rules in the first part of the theorem. Then note that any maps $D(d,1)$ in the top row simplify to zero, and maps $D(d,1)$ in the bottom row are just $d$. We can thus view the complex as a series of chains of anticommuting squares with non-zero maps which are `connected' by $d$ maps. (Example~\ref{eg-5-1} illustrates the analogous case for $\Kb{T * [1]}$, where the chains of squares are connected by $b$ maps.) If the morphism string ends or begins with a $d$ map, then there an extra saddle map is created, which is the second subword rule regarding $\Kb{T + [1]}$.

By virtue of the structure of the morphism string of $\Kb{T}$, these chains of squares come in two types: a single square induced by a morphism string $c/c'$, or a chain of squares induced by a morphism string which is a substring of $s'b'a'b'\ldots ababs$ containing at least one $s$ or $s'$.

The single squares simplify via the square isomorphism to squares with morphism string $s'bs$ or $s'b's$, depending on the direction of the arrow in the top of the square. This is the third subword rule regarding $\Kb{T + [1]}$.

For the other type of square chain, note that the morphism string that induced this complex is that of two minimal integer tangle complexes which share the same first subobject. (E.g. $s'b'a'babs$ is the direct sum of $\Kb{3}$ and $\Kb{4}$ with the first subobjects identified.) As such, these square chains simplify down in exactly the same way as in the proof of Proposition~\ref{proof-ints}. That is, morphism strings of the form $ba\ldots bas$ go to $aba\ldots bas$. This is second rule regarding $\Kb{T + [1]}$.

And we're essentially done: each of these chains of square have simplified to zig-zag complexes whose z-ends are $[\infty]$ tangles, and were joined by $d$ maps, so the entire complex is also a zig-zag complex. Since every rational tangle can be constructed from $[1]$ which is a zig-zag complex satisfying the rules in the first part of the theorem, it follows that any complex created by adding or multiplying $[1]$ is a zig-zag complex satisfying the rules too.
\hfill $\Box$

\section{A matrix action on the Bigradings of the minimal complexes}\label{sec-burau}

So far, we have only presented one half the picture: we have essentially ignored the bigradings of the subobjects in the minimal Khovanov complex of a rational tangle. Quite surprisingly, it turns out that the bigradings can be described by matrix actions. Let us formalize what we mean by `bigrading information'. Let $\BN(4)$ be the graded subcategory of $\Mat(\Cobbl)$ generated by the $[\infty]$ and $[0]$ tangles.

Define a function $\Psi:\Kom(BN(4)) \rightarrow \Z[q^{\pm 1},t^{\pm 1}] \la \minizero, \miniinf \ra$ as follows. Given a complex $\Omega$, express each of its objects as direct sum of indecomposable objects. To each indecomposable object $X$ in $\Omega^j$ we associate the element $q^it^j X$ where $i$ is the internal grading of the subobject. Then define $\Psi(C)$ to be the sum of these elements over all subobjects in the complex.

We will present an element $p_0 \;\minizero + p_{\infty} \miniinf \in \Z[q^{\pm 1},t^{\pm 1}] \la \minizero, \miniinf \ra$ as $(p_0,p_{\infty})$. For example,
\[
	\Psi(\bullet \rightarrow \underline{\miniinf}_{(0)} \stackrel{\phi}{\longrightarrow} \miniinf_{(0)} \rightarrow \bullet) = (0,1 + t).
\]

As the example demonstrates, $\Psi$ is not a homotopy invariant, since this complex is contractible when $\phi$ is an isomorphism. As such, to make the map well-defined we will only consider complexes in $\Kom(\BN(4))$ up to isomorphism.

Let us now consider rational tangles as partial closures of elements in $B_3$. (Figure~\ref{fig-3braid_pres}.) More precisely, we can consider a positive rational tangle $T = \la a_1,a_2,\ldots,a_n \ra$ (with $a_i \geq 0$ ($a_i \leq 0)$) as the element $\sigma_1^{a_1}\sigma_2^{-a_2} \sigma_1^{a_3} \sigma_2^{-a_4}\cdots $ of $B_3$ whose bottom two rows have been closed off at the beginning.

From such a perspective, adding $[1]$ to a rational tangle corresponds to multiplying the braid by $\sigma_1$, while multiplying the tangle by $[1]$ corresponds to multiplying the braid by $\sigma_2\inv$.
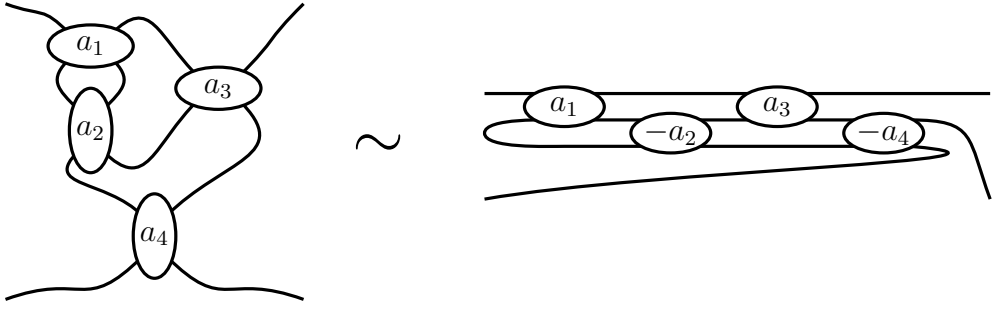
\begin{figure}
	\centering
	\begin{tikzpicture}[scale=0.7,very thick]
\begin{scope}[looseness=1.5,scale=0.8]
\node (a1l) at (1.8,8) {};
\node (a1r) at (2.2,8) {};
\node (a2u) at (2,6.2) {};
\node (a2d) at (2,5.8) {};
\node (a3l) at (4.8,7) {};
\node (a3r) at (5.2,7) {};
\node (a4u) at (3.5,3.7) {};
\node (a4d) at (3.5,3.3) {};

\draw (a1l) [out=135,in=-20] to (0,9);
\draw (a1l) [out=-135,in=135] to (a2u);
\draw (a1r) [out=-45,in=45] to (a2u);
\draw (a2d) [out=-135, in=135] to (a4u);
\draw (a2d) [out=-45,in=-135] to (a3l);
\draw (a1r) [out=45,in=135] to (a3l);
\draw (a3r) [out=-45,in=45] to (a4u);
\draw (a4d) [out=-135,in=20] to (0,2);
\draw (a4d) [out=-45,in=160] to (7,2);
\draw (a3r) [out=45,in=-135] to (7,9);

\filldraw[fill=white] (2,8) ellipse (1cm and .5cm) node {$a_1$};
\filldraw[fill=white] (2,6) ellipse (.5cm and 1cm) node {$a_2$};
\filldraw[fill=white] (5,7) ellipse (1cm and .5cm) node {$a_3$};
\filldraw[fill=white] (3.5,3.5) ellipse (.5cm and 1cm) node {$a_4$};
\end{scope}

\begin{scope}[shift={(9,4.5)},scale=0.5]
\draw[looseness=1] (0,-2) [out=10,in=-5] to (16,0) -- (3,0) [looseness=0.5,out=180,in=-90] to (0,0.5) [out=90,in=180] to (3,1) -- (15,1);
\draw (0,2) to (19,2);
\draw (16,1) [looseness=1.5,out=0,in=110] to (19,-2);

\filldraw[fill=white] (3,1.5) ellipse (1.5cm and .75cm) node {$a_1$};
\filldraw[fill=white] (7,0.5) ellipse (1.5cm and .75cm) node {$-a_2$};
\filldraw[fill=white] (11,1.5) ellipse (1.5cm and .75cm) node {$a_3$};
\filldraw[fill=white] (15,0.5) ellipse (1.5cm and .75cm) node {$-a_4$};
\end{scope}
\node at (7,4.5) {\huge $\sim$};
\end{tikzpicture}
	\caption{Rational tangles can be considered as (partial closures) of elements of $B_3$.}
	\label{fig-3braid_pres}
\end{figure}

This clearly isn't the traditional way to close off a braid. Rather, our `closure' of a braid is really the numerator closure of the corresponding rational tangle. Since every rational tangle has a unique canonical form, it follows that each positive (negative) rational tangle has a unique presentation $\sigma_1^{a_1} \sigma_2^{-a_2}\sigma_1^{a_3}\cdots \sigma_1^{a_n}$ with $a_i > 0$ ($a_i < 0$) for $i<n$ and $a_n \geq 0$ ($a_n \leq 0$).

We can then describe the bigrading information of the Khovanov complexes of positive rational tangles as follows. (Negative rational tangles admit a similar description.)

\begin{thm}
\label{thm-mat-actions}
Let $\phi:\Q \rightarrow B_3$ be the function taking a rational number $r$ to the canonical braid presentation (above) corresponding to the rational tangle $T$ with $F(T) = r$. Let $\Psi$ be as above. For any rational tangle $T$, identify the minimal complex $\Kbp{T}$ with its representative in $\Kom(\BN(4))$. Finally, let $r \in \Q_+$, and fix an orientation of $\phi(r)$. Then $\Psi(\Kbp{\phi(r)\sigma_1})$ is given by $\Psi(\Kbp{\phi(r)})A_{\pm}$ and $\Psi(\Kbp{\phi(r)\sigma_2\inv})$ is given by $\Psi(\Kbp{\phi(r)})P_{\pm}$, where
\[
	A_+ = q\begin{pmatrix} qt & 1 \\ 0 & q\inv	\end{pmatrix}, \qquad A_- = q^{-2}t^{-1}\begin{pmatrix} qt & 1 \\ 0 & q\inv	\end{pmatrix},
\]

\[
	P_+ = q^2\begin{pmatrix} qt & 0 \\ t & q\inv	\end{pmatrix}, \qquad P_- = q\inv t\inv 
\begin{pmatrix} qt & 0 \\ t & q\inv	\end{pmatrix}.
\]
(The signs correspond to the orientation of the crossing being added.)
\end{thm}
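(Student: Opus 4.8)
The plan is to read $\Psi$ off directly from the canonical zig-zag representative supplied by Theorem~\ref{thm-main}. Since every indecomposable subobject of that representative is a $[0]$ or an $[\infty]$ tangle, $\Psi(\Kb{T})$ splits as a pair $(p_0,p_\infty)$, where $p_0$ (resp.\ $p_\infty$) is the sum of $q^it^j$ over the $[0]$ (resp.\ $[\infty]$) subobjects, with $i$ the internal grading and $j$ the homological height. The morphism string together with the bigrading of its starting $z$-end determines all these weights, because each letter records a fixed quantum shift ($\pm2$ for $a,b,c,d$; $\pm1$ for $s,r$) and homological shift ($\pm1$) between consecutive subobjects. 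Consequently it suffices to determine, for a single indecomposable subobject of each type, the multiset of subobjects (with their grading shifts) it produces in $\Kb{T\ast[1]}$ and in $\Kb{T+[1]}$, and to check that this multiset depends only on the type of the subobject, not on its position in the complex. Right-multiplication by the claimed matrices is exactly the statement that $[0]$ and $[\infty]$ subobjects propagate independently with fixed shifts.

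First I would pin down the support of the matrices (which entries vanish) using the node-fate analysis already implicit in the proof of Theorem~\ref{thm-main}. Building $\Kb{T\ast[1]}=D(\Kb{T},\Kb{1})$ from the product planar arc diagram, each subobject $X$ of $\Kb{T}$ yields a top term $X\ast[\infty]=X$ and a bottom term $X\ast[0]$, joined by a saddle. Because $[\infty]\ast[0]=[0]$ while $[0]\ast[0]$ is a $[0]$ carrying an extra loop, the delooping (Lemma~\ref{lem-deloop}) and Gaussian elimination (Lemma~\ref{lem-GE}) steps performed in that proof leave, from each old $[\infty]$, one $[\infty]$ and one $[0]$, and from each old $[0]$, a single $[0]$. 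Dually, using the sum planar arc diagram together with $[0]+[\infty]=[\infty]$ and $[\infty]+[\infty]=[\infty]\sqcup(\text{loop})$, an old $[0]$ produces one $[0]$ and one $[\infty]$ while an old $[\infty]$ produces a single $[\infty]$. This already forces the lower-triangular shape of $B_\pm$ (a $[0]$ never spawns an $[\infty]$ under $\ast[1]$) and the upper-triangular shape of $R_\pm$, and it matches the heuristic of Figure~\ref{fig-btwist}, where the number of $[0]$ tangles grows by the number of $[\infty]$ tangles.

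Next I would compute the quantum and homological shift attached to each surviving subobject. These come from three sources: the intrinsic degrees of the cobordisms involved (a saddle, and the dotted identity sheets comprising $a,b,c,d$), the $\pm1$ grading shifts introduced by each delooping, and the global normalization shift of the whole complex prescribed by Definition~\ref{def-gradings} when a crossing is added. Assembling these into the $2\times2$ array and comparing with $R_+$ and $B_+$ is the crux; I would cross-check the outcome against the explicit passage $\Kb{0,2,2}\to\Kb{0,2,2,1}$ of Example~\ref{exam-btwist} and against the per-step shift table of Example~\ref{eg-82}. The delicate point, which I expect to be the main obstacle, is verifying that the shifts are genuinely context-independent: a subobject sitting at a $z$-end is created by a slightly different local reduction than an interior one (this is the origin of the boundary cases in the rules $f,g$ of Theorem~\ref{thm-main}), so I must confirm that these boundary reductions change only how subobjects are wired together, not their number or their individual bigradings. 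The clean way to close the gap is to observe that delooping, Gaussian elimination, and the square isomorphism of Proposition~\ref{prop-square_iso} are all local moves whose effect on a given surviving subobject's bigrading is determined by its immediate neighbourhood, which is in turn fixed by its type.

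Finally I would dispose of the orientation dependence. Reversing the sign of the added crossing leaves the unnormalized (combinatorial) complex unchanged but interchanges the roles of $n_+$ and $n_-$; tracing this through Definition~\ref{def-gradings} shifts every combinatorial vertex by $-3$ in quantum grading and $-1$ in homological height, i.e.\ multiplies every bigrading, and hence all of $\Psi$, by the single global factor $q^{-3}t^{-1}$. Since $R_-=q^{-3}t^{-1}R_+$ and $B_-=q^{-3}t^{-1}B_+$, the negative-crossing cases follow at once from the positive ones, completing the argument.
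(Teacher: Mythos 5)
Your proposal is correct and follows essentially the same route as the paper: build $\Kb{T+[1]}$ and $\Kb{T\ast[1]}$ from the addition/product planar arc diagrams, track the fate of each subobject through delooping, Gaussian elimination and the square isomorphism (which yields exactly the triangular support of $R_\pm$ and $B_\pm$ you identify), and reduce the negative-orientation case to the positive one via the global factor $q^{-3}t^{-1}$. The only organizational difference is that the paper verifies the grading shifts segment-by-segment using the subword decomposition of the morphism string from Theorem~\ref{thm-main} --- which is precisely how it disposes of the context-independence worry you flag --- rather than subobject-by-subobject.
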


We note that this is quite unexpected. While $B_3$ acts on rational tangles as described, there is no reason to expect that the bigrading information of the minimal Khovanov complexes should admit such a description.

\begin{example}
	\label{bigrading-example}
	Let us do the bigrading calculation we used earlier when computing $\Kh(8_2)$ in Example~\ref{eg-82}. The knot $8_2$ is the numerator closure of $\la 5,1, 2\ra$. When $8_2$ is oriented the underlying tangle inherits an orientation of type I (recall Section~\ref{sec-RTs}). As such, when we build $\la 5,1,2 \ra$ from $[0]$, the first six crossings have negative orientation, while the last two crossings are positively oriented.

	As such, the bigrading information of $\Kb{5,1,2}$ is given by
	\[
		(1,0) \cdot {A_-}^5{P_-}{A_+}^2 = (p_0,p_{\infty}).
	\]
	Calculating these, we find
	\[
		p_0 = q^{-11}t^{-3} + q^{-9}t^{-2} + q^{-7}t\inv + q^{-5} + q^{-3}t + q\inv t^2,
	\]
	\[
		p_{\infty} = q^{-16}t^{-6} + 2q^{-12}t^{-5} + 3q^{-12}t^{-4} + 3q^{-10}t^{-3} + 3q^{-8}t^{-2} + 2q^{-6}t\inv + 2q^{-4} + q^{-2}t.
	\]
	By examining Figure~\ref{fig-82-dot-diag} we see that bigrading information of the subobjects in the complex are accounted for.
\end{example}

\emph{Proof of Theorem~\ref{thm-mat-actions}:}
We will prove that $\Psi(\Kb{\phi(r)\sigma_1})$ is given by $\Psi(\Kb{\phi(r)})A_{\pm}$. (The proof of the other claim is analogous.) Let $T$ be a positive rational tangle, and consider $\Kb{T}$. Adding $[1]$ to $T$ is the same as multiplying the corresponding braid by $\sigma_1$. We will prove the claim by examining how the bigradings of the subobjects in $\Kb{T}$ influence the bigradings of the subobjects in $\Kb{T + [1]}$. By doing this we will be able to express the bigrading information of $\Kb{T+ [1]}$ in terms of $\Kb{T}$.

From Theorem~\ref{thm-main} we can uniquely split the morphism string of $\Kb{T}$ into subwords in $\left\{ c, c',d,d',s'\Box,s'\Box s,\Box s \right\}$. We can account for all the subobjects in $\Kb{T + [1]}$ by accounting for the subobjects created from each subword of $\Kb{T}$, though when we do this we need to be careful not to over-count or miss any subobjects.

As in the last section, construct $\Kb{T+[1]}$ from $\Kb{T}$ using the `addition' planar arc diagram $D$. Let us further assume that the orientation of the crossing being added is positive. (So we will need to show the bigradings change in a manner described by the action of $A_+$.)

Theorem~\ref{thm-main} tells us that a word in the morphism string of $\Kb{T}$ of the form $s'\Box s$ is transformed to $s'b'\boxdot b s$. (In this instance $\Box$ is an alternating word in $\{ a,a',b,b'\}$.) Let us recall the proof. The complex associated to $s'\Box s$ has the form
\[
	\begin{tikzpicture}[auto,semithick, scale=1.2]
\node (1) at (-2.5,0) {\nodezero};
\node (2) at (0,0) {\nodeinf};
\node (3) at (2,0) {$\cdots$};
\node (4) at (4,0) {\nodeinf};
\node (5) at (6.5,0) {\nodeinf};
\node (6) at (9,0) {\nodezero\;.};
\begin{scope}[yshift=-0.65cm]
\node (1a) at (-2.5,0) {\footnotesize $(a)$};
\node (2a) at (0,0) {\footnotesize $(a-1)$};
\node (4a) at (4,0) {\footnotesize $(b-3)$};
\node (5a) at (6.5,0) {\footnotesize $(b-1)$};
\node (6a) at (8.95,0) {\footnotesize $(b)$};
\end{scope}
\draw[<-] (1) to node {$s$} (2);
\draw[<-] (2) to (3);
\draw[->] (3) to (4);
\draw[->] (4) to node {$b$} (5);
\draw[->] (5) to node {$s$} (6);
\end{tikzpicture}
\]
This consists of $n$ $\miniinf$ subobjects and $2$ $\minizero$ subobjects.

Recall that when $[1]$ is positively oriented, $\Kb{1} = \bullet \rightarrow \underline{\miniinf}_{(1)} \longrightarrow \minizero_{(2)} \rightarrow \bullet$. Hence when $D(\Kb{T},\Kb{1})$ is constructed, the previous complex segment gives rise to the following complex segment.
\[
	\begin{tikzpicture}[scale=1.2,semithick]
\node (1) at (0,2) {\nodemhzeroinf};
\node (2) at (2.5,2) {\nodeclownh};
\node (3) at (4.5,2) {$\cdots$};
\node (4) at (6.5,2) {\nodeclownh};
\node (5) at (9,2) {\nodeclownh};
\node (6) at (11.5,2) {\nodemhzeroinf};

\node (7) at (0,0) {\nodemhzerozero};
\node (8) at (2.5,0) {\nodemhinfzero};
\node (9) at (4.5,0) {$\cdots$};
\node (10) at (6.5,0) {\nodemhinfzero};
\node (11) at (9,0) {\nodemhinfzero};
\node (12) at (11.5,0) {\nodemhzerozero};

\begin{scope}[yshift=2.65cm]
\node (1a) at (0,0) {\footnotesize $(a+1)$};
\node (2a) at (2.5,0) {\footnotesize $(a)$};
\node (4a) at (6.5,0) {\footnotesize $(b-2)$};
\node (5a) at (9,0) {\footnotesize $(b)$};
\node (6a) at (11.5,0) {\footnotesize $(b+1)$};
\end{scope}
\begin{scope}[yshift=-0.65cm]
\node (1b) at (0,0) {\footnotesize $(a+2)$};
\node (2b) at (2.5,0) {\footnotesize $(a+1)$};
\node (4b) at (6.5,0) {\footnotesize $(b-1)$};
\node (5b) at (9,0) {\footnotesize $(b+1)$};
\node (6b) at (11.5,0) {\footnotesize $(b+2)$};
\end{scope}

\draw[<-] (1) to (2);
\draw[<-] (2) to (3);
\draw[->] (3) to (4);
\draw[->] (4) to (5);
\draw[->] (5) to (6);

\draw[->] (1) to (7);
\draw[->] (2) to (8);
\draw[->] (4) to (10);
\draw[->] (5) to (11);
\draw[->] (6) to (12);

\draw[<-] (7) to (8);
\draw[<-] (8) to (9);
\draw[->] (9) to (10);
\draw[->] (10) to (11);
\draw[->] (11) to (12);
\end{tikzpicture}
\]
After multiple applications of delooping and Gaussian elimination, this simplifies down.
\[
	\begin{tikzpicture}[scale=1.2,auto,semithick]
\node (1) at (0,2) {\nodeinf};
\node (2) at (2.5,2) {\nodeinf};
\node (3) at (4.5,2) {$\cdots$};
\node (4) at (6.5,2) {\nodeinf};
\node (5) at (9,2) {\nodeinf};
\node (6) at (11.5,2) {\nodeinf};

\node (7) at (0,0) {\nodezero};
\node (12) at (11.5,0) {\nodezero};

\begin{scope}[yshift=2.65cm]
\node (1a) at (0,0) {\footnotesize $(a+1)$};
\node (2a) at (2.5,0) {\footnotesize $(a-1)$};
\node (4a) at (6.5,0) {\footnotesize $(b-3)$};
\node (5a) at (9,0) {\footnotesize $(b-1)$};
\node (6a) at (11.5,0) {\footnotesize $(b+1)$};
\end{scope}
\begin{scope}[yshift=-0.65cm]
\node (1b) at (0,0) {\footnotesize $(a+2)$};
\node (6b) at (11.5,0) {\footnotesize $(b+2)$};
\end{scope}

\draw[<-] (1) to node {$b$} (2);
\draw[<-] (2) to (3);
\draw[->] (3) to (4);
\draw[->] (4) to node {$a$} (5);
\draw[->] (5) to node {$b$} (6);

\draw[->] (1) to node {$s$} (7);
\draw[->] (6) to node {$s$} (12);
\end{tikzpicture}
\]
In this portion of the complex, there are now $n+2$ $\miniinf$ subobjects, and still $2$ $\minizero$ subobjects. However, the $\minizero$ subobjects now differ from the $\minizero$ subobjects in the first complex by a factor of $q^2 t$.

On the other hand, of the $n+2$ $\miniinf$ subobjects of the top row above, $n$ were produced after delooping $D(\miniinf,\miniinf)$ terms. These subobjects picked up a quantum grading factor of $q$ from $\Kb{1}$, but then immediately lost it after delooping. The two other $\miniinf$ subobjects were produced via $D(\miniinf,\minizero)$. The bigrading information of these will then differ from those of the $\minizero$ subobjects in the first complex by a factor of $q$.

It follows, then, that the bigrading information $(p'_0,p'_{\infty})$ of this segment can be described in terms of the bigrading information of the first segment $(p_0,p_{\infty})$ via
\[
	(p'_0,p'_{\infty}) = (p_0,p_{\infty}) \cdot \begin{pmatrix} q^2 t & q \\ 0 & 1	\end{pmatrix} = (p_0,p_{\infty}) \cdot A_+.
\]
By the same argument the change in bigrading information described by $s'\Box \rightarrow s'b'\boxdot$ and $\Box s \rightarrow \boxdot b s$ is captured by multiplication by $A_+$.

In between words of the form $s' \Box s$ in the morphism string for $\Kb{T}$ are words of the form $\left\{ d,d,c,c' \right\}$. So consider the subcomplex associated to $c$.
\[
	\begin{tikzpicture}[scale=1.2,semithick,auto]
\node (1) at (0,0) {\nodezero};
\node (2) at (3,0) {\nodezero};
\begin{scope}[yshift=-0.65cm]
\node (1b) at (0,0) {\footnotesize $(a)$};
\node (2b) at (3,0) {\footnotesize $(a+2)$};
\end{scope}
\draw[->] (1) to node {$\vcenter{\hbox{\nodezerodota}} + \vcenter{\hbox{\nodezerodotb}}$} (2);
\end{tikzpicture}
\]
When $D(\Kb{T},\Kb{1})$ is constructed, this gives rise to the following complex segment.
\[
	\begin{tikzpicture}[scale=1.2,semithick,auto]
\node (01) at (0,0) {\nodemhzeroinf};
\node (11) at (3,0) {\nodemhzeroinf};
\node (00) at (0,-2) {\nodemhzerozero};
\node (10) at (3,-2) {\nodemhzerozero};

\begin{scope}[yshift=0.65cm]
\node (1a) at (0,0) {\footnotesize $(a+1)$};
\node (2a) at (3,0) {\footnotesize $(a+3)$};
\end{scope}
\begin{scope}[yshift=-2.65cm]
\node (1b) at (0,0) {\footnotesize $(a+2)$};
\node (2b) at (3,0) {\footnotesize $(a+4)$};
\end{scope}
\draw[->] (01) to node {$2\;\vcenter{\hbox{\nodeinfdota}}$} (11);
\draw[->] (00) to node {$\vcenter{\hbox{\nodezerodota}} + \vcenter{\hbox{\nodezerodotb}}$} (10);
\draw[->] (01) to node {$s$} (00);
\draw[->] (11) to node {$-s$} (10);
\end{tikzpicture}
\]
This simplifies by the square isomorphism.
\[
	\begin{tikzpicture}[scale=1.2,semithick,auto]
\node (01) at (0,0) {\nodeinf};
\node (11) at (2.5,0) {\nodeinf};
\node (00) at (0,-2) {\nodezero};
\node (10) at (2.5,-2) {\nodezero};

\begin{scope}[yshift=0.65cm]
\node (1a) at (0,0) {\footnotesize $(a+1)$};
\node (2a) at (2.5,0) {\footnotesize $(a+3)$};
\end{scope}
\begin{scope}[yshift=-2.65cm]
\node (1b) at (0,0) {\footnotesize $(a+2)$};
\node (2b) at (2.5,0) {\footnotesize $(a+4)$};
\end{scope}
\draw[->] (01) to node {$b$} (11);
\draw[->] (01) to node {$s$} (00);
\draw[->] (11) to node {$-s$} (10);
\end{tikzpicture}
\]
The bigrading of the $\minizero$ subobjects here differ from the originals by a factor of $q^2 t$, while the bigrading of the $\miniinf$ subobjects differ from the originals by a factor of $q$. Hence, on this complex segment the overall change in bigrading information can be described by the action of $A_+$.

We're essentially done: if a subword $d/d'$ in the morphism string is not at either end, then it is adjacent to letters in $\left\{ s',s,c,c' \right\}$, meaning the subobjects and the head and tail of $d$ have already been accounted for by our analysis of the $s' \Box s \rightarrow s' b' \boxdot b s$ and $c \rightarrow s'bs$ transformations. If $d/d'$ is at the end of the morphism string, then
there will be a subobject and the head or tail of it we will not have accounted for yet, but it is easy to check that the bigradings of the two subobjects in $\Kb{T + [1]}$ that this produces can be described by the action of $A_+$ on the bigrading information of the subobject too.

Hence the change in bigrading information of the complex is locally described by the action of $A_+$, and since we have accounted for all the subobjects in $\Kb{T + [1]}$, it follows that it is globally described by the action too. We then immediately get that $\Psi(\Kb{\phi(r)\sigma_1})$ is given by $\Psi(\Kb{\phi(r)})A_-$ when the crossing being added has negative orientation, since $\Psi(\Kb{1})$ when $[1]$ has negative orientation differs from $\Psi(\Kb{1})$ when $[1]$ has positive orientation by a factor of $q^{-3} t\inv$.
\hfill $\Box$

Given that the bigrading information admits such an elegant description in terms of matrix actions and $B_3$, it is a natural question to ask whether the matrices satisfy the braid relation. Since multiplying the braid by $\sigma_1$ corresponds to multiplication by $A_{\pm}$, and multiplying the braid by $\sigma_2\inv$ corresponds to multiplication by $P_{\pm}$, if they matrices did braid we would expect $A_{\pm}$ and $P_{\pm}\inv$ to braid. This is indeed the case. The ramifications of this observation remain unclear.

\begin{coro}\label{coro-braid-rels}
The pairs of matrices ($A_+$, $P_-\inv$) and ($A_-$, $P_+\inv$) satisfy the braid relation. After a change of basis these give the (reduced) Burau representation of $B_3$.

\begin{proof}
With
\[
	p = \begin{pmatrix}
		q & 0 \\
		0 & 1
	\end{pmatrix},
\]
when we change basis we obtain
\[
	\sigma_1' = p\inv A_+ p = 
	\begin{pmatrix}
		q^2 t & 1 \\
		0 & 1	\end{pmatrix}, \qquad \sigma_2' = p\inv P_-\inv p =
	\begin{pmatrix}
		1 & 0\\
		-q^2 t & q^2 t
	\end{pmatrix}.
\]
This is exactly the (reduced) Burau representation of $B_3$ after the change of variables $t \mapsto -q^2 t$.
\end{proof}
\end{coro}

We note however, that while $A_{\pm}$ and $P_{\pm}$ describe the addition or product of $[1]$ with a positive rational tangle, $A_{\pm}\inv$ and $P_{\pm}\inv$ \emph{do not} describe the addition or product of $[-1]$ with a positive rational tangle.

For instance, consider the Khovanov complex associated to $[1]$ with positive orientation. When the corresponding braid ($\sigma_1$) is multiplied by $\sigma_1\inv$, the $[0]$ tangle is obtained. However,
\[
	\Psi(\Kb{1}) \cdot A_-\inv = (q^2 t, q) \cdot \begin{pmatrix} q & -q^2 \\ 0 & q^3 t	\end{pmatrix} = (q^3 t, 0) \ne \Psi(\Kb{0}) = (1,0).
\]
We note though, that the orientations are at fault for the discrepancy here: if $[1]$ is positively oriented, then when the braid is multiplied by $\sigma_1\inv$, the new crossing is negatively oriented. (Indeed, $A_+$ and $P_-$ differ from each other by a factor of $q^3 t$.)

\bibliographystyle{alpha}
\renewcommand{\markboth}[2]{}
\bibliography{bibtotal}

\end{document}